\newtheorem{thm}{Theorem}[section]
\newtheorem{prop}[thm]{Proposition}
\newtheorem{lem}[thm]{Lemma}
\newtheorem{cor}[thm]{Corollary}
\newtheorem{question}[thm]{Question}
\theoremstyle{definition}
\newtheorem{example}[thm]{Example}
\newtheorem{rem}[thm]{Remark}
\def\cocoa{{\hbox{\rm C\kern-.13em o\kern-.07em C\kern-.13em o\kern-.15em A}}}
\theoremstyle{remark}
\newcommand{\skipit}[1]{{}}
\newcommand{\prfend}{\hbox to7pt{\hfil}
\par\vskip-\baselineskip\hbox to\hsize
{\hfil\vbox {\hrule width6pt height6pt}}\vskip\baselineskip}
\newcommand {\PP}{\mathbb{P}}
\newcommand {\FF}{\mathbb{F}}
\newcommand {\X} {\mathbb {X}}
\DeclareMathOperator{\lk}{lk_{\Delta} }
\newcommand{\ffi}{\varphi}
\newcommand{\F}{\mathbb{F}}
\def\eatit#1{}
\newcommand{\myarrow}[2]{\hbox to #1pt{\hfil$\to$\hfil}{\hskip-#1pt{\raise
10pt\hbox to#1pt{\hfil$\scriptscriptstyle #2$\hfil}}}}
\begin{document}

\title{Matroid configurations and symbolic powers of their ideals}

\author[A.V.\ Geramita]{A.V.\ Geramita} 
\address{Department of Mathematics and Statistics, Queen's University, King\-ston, 
Ontario, Canada and Dipartimento di Matematica, Universit\`a di Genova, Genoa, Italy}
\email{Anthony.Geramita@gmail.com}

\author{B.\ Harbourne}
\address{Department of Mathematics\\
University of Nebraska\\
Lincoln, NE 68588-0130 USA}
\email{bharbour@math.unl.edu}

\author{J.\ Migliore} 
\address{Department of Mathematics, 
University of Notre Dame, 
Notre Dame,
IN 46556 \\
USA}
 \email{migliore.1@nd.edu}

\author{U.\ Nagel}
\address{Department of Mathematics\\
University of Kentucky\\
715 Patterson Office Tower\\
Lexington, KY 40506-0027, USA}
\email{uwe.nagel@uky.edu}

\date{July 1, 2015}

\begin{abstract}
Star configurations are certain unions of linear subspaces of
projective space that have been studied extensively. We develop a framework for studying a substantial generalization, which we call  matroid configurations, whose ideals generalize Stanley-Reisner ideals of matroids.   Such a matroid configuration is a  union of complete intersections of a fixed codimension. Relating these to the Stanley-Reisner ideals of matroids and using methods of Liaison Theory  allows us, in particular, to describe the Hilbert function and minimal generators of the ideal of, what we call,  a hypersurface configuration. We also establish that the symbolic powers of the ideal of any matroid configuration are Cohen-Macaulay. As applications, we study ideals coming from certain complete hypergraphs and ideals derived from tetrahedral curves.  We also consider  Waldschmidt constants and  resurgences. In particular, we determine the resurgence of any star configuration and many hypersurface configurations. Previously, the only non-trivial cases for which the resurgence was known were certain monomial ideals and ideals of finite sets of points. Finally, we point out a connection to secant varieties of varieties of reducible forms.
\end{abstract}

\thanks{
{\bf Acknowledgements}: Geramita was partially supported by a Natural Science and Engineering Research Council (Canada) grant.
Harbourne was partially supported by NSA grant number  H98230-13-1-0213.
Migliore was partially supported by NSA grant number H98230-12-1-0204 and by Simons Foundation grant \#309556.
Nagel was partially supported by NSA grant number H98230-12-1-0247 and by Simons Foundation grant \#317096.
We are grateful to Luchezar Avramov for suggesting the proof of Lemma \ref{lucho}.  
We are also grateful to David Cook II for useful comments.
\vskip0in
{\bf Subject Classifications}: 
Primary: 
14N20, 
14M05, 
05B35;  
Secondary: 
13F55, 
05E40, 
13D02, 
13C40. 
\vskip0in
{\bf Keywords}: 
arithmetically Cohen-Macaulay,
Hilbert function, 
hypergraph, 
hypersurface configuration, 
linkage, 
matroid, 
monomial ideal, 
resurgence, 
Stanley-Reisner ideal, 
star configuration, 
symbolic powers,
tetrahedral curves, 
Waldschmidt constant
}

\maketitle

\section{Introduction}

Let $k$ be an infinite field and let $R = k[x_0, \ldots , x_n] = \oplus_{i\geq 0} R_i$ be the standard graded polynomial ring.  Suppose $\ell_1, \ldots , \ell_s$ are forms in $R_1$ and consider the hyperplanes defined by them.  Under varying uniformity assumptions on the family of forms (e.g., for some $c \leq n+1$, any subset of $c$ of the linear forms are linearly independent) the collection of codimension $c$ linear subspaces obtained by intersecting subfamilies of these hyperplanes have appeared in the literature, often with motivations found in problems in algebra, geometry or combinatorics (see, e.g., \cite{AS1}, \cite{AS2}, \cite{PSC}, \cite{BH}, \cite{CChG}, \cite{CGVT}, \cite{CHT}, \cite{francisco}, \cite{FMN}, \cite{GHM3},  \cite{GMS},  \cite{MN7}, \cite{PS}, \cite{schwartau},   \cite{TVV}, \cite{V}).  As one can see in these references, the questions asked involve the defining ideal of the collection of such linear spaces, a description of the symbolic powers of those ideals and their finite free resolutions, or more simply questions about the Cohen-Macaulayness and Hilbert function of these ideals.



In this paper we develop a framework for  generalizations of these results.  As we shall see, these generalizations also have interesting consequences in algebra, algebraic geometry and combinatorics.

The generalizations proceed in two steps.  First let  $\lambda = [d_1,\dots,d_s]$ be a vector, where each $d_i$ is a positive integer.  Let $f_1,\dots,f_s$ be homogeneous forms in $R$ with $\deg f_i = d_i$ and let $F_1, \ldots , F_s$ be the hypersurfaces they define in $\PP^n$.  For any $1 \leq c \leq n$ assume that the intersection of any $c+1$ of these hypersurfaces has codimension $c+1$.  We do not require further generality for the $f_i$, not even that they be reduced.    A  {\em $\lambda$-configuration} of {\em codimension $c$} is the union, $V_{\lambda,c}$, of all the codimension $c$ complete intersection subschemes obtained by intersecting $c$ of the hypersurfaces. Notice that any such complete intersection may fail to be irreducible or even reduced. However, it follows from our assumptions that no two of them can have common components. If $\lambda = [1,1,\dots,1]$ then the collection of $V_{\lambda,c}$ includes, what has been called in the literature, {\em codimension $c$ star configurations}.  We will refer to a $V_{\lambda,c}$, for a possibly unspecified $\lambda$ (or $c$),  as a {\em hypersurface configuration (of codimension $c$)}. We will discuss the second step of the generalization when we describe the results of $\S3$.

One purpose of this paper is to show how essentially the same construction as in \cite{GHM3} provides the description of the ideal and the Hilbert function of a $\lambda$-configuration of codimension $c$.  But a new idea is required to show that the Cohen-Macaulay property holds for all symbolic powers of the ideal  of a hypersurface configuration.   

Thus we have the surprising result that these more general configurations of complete intersections (in arbitrary codimension) are just as well-behaved as they are in the case that the components are linear.

The idea of replacing the hyperplanes by hypersurfaces is not new.  For instance: \cite{BH} studies the minimal degrees of generators of the ideals of $\lambda$-configurations when the $f_i$ are general in order to bound Waldschmidt constants; \cite{AS1} describes minimal generators, Hilbert functions and minimal free resolutions  of the configurations $V_{\lambda,c}$ assuming $c=2$ and the $f_i$ are general; \cite{PS} describes the same invariants when the $f_i$ are general and $c$ is arbitrary;  and \cite{AS2} describes the same invariants when $c=2$ and the $f_i$ are replaced by their powers.  Although the title of \cite{AS2} refers to \lq\lq fat" star configurations, the schemes they study are not what have traditionally been referred to as \lq\lq fat" schemes, i.e., schemes defined by symbolic powers.  Consequently our results on symbolic powers (see \S3) do not overlap with the results of \cite{AS2}.
  
The purpose of this note is to put all of these results into a simple framework, and then to illustrate some applications of this method.  To that end, the paper is organized in the following way: in $\S 2$ we set up the basic results we will need.  We show that  $\lambda$-configurations are ACM and find their degrees, Hilbert functions and the minimal generators of their defining ideals.  These results were known but our approach to them (via methods from Liaison Theory) is new.  

In $\S3$ we begin developing a theory of specializing Stanley-Reisner ideals of simplicial complexes. This is the second step of our generalization of linear star configurations.  This section contains the main results of the paper.   We carry out this step  for the class of matroid complexes. We refer to the ideals obtained by replacing the variables of the Stanley-Reisner ideal of these complexes by  homogeneous polynomials as {\em specializations of matroid ideals}. We show that, under certain conditions, these specializations inherit many of the properties of the original matroid ideals. In particular, all their symbolic powers are Cohen-Macaulay. Our results extend most of the earlier investigations for star configurations. Indeed, star configurations and hypersurface configurations are obtained as special cases, namely as specializations of the  Stanley-Reisner ideals of {\em uniform} matroids. 

 The final section  gives applications of our results.  We consider ideals coming from certain complete hypergraphs and ideals derived from tetrahedral curves.  We also discuss connections to Waldschmidt constants and  resurgences;  in particular, we determine the resurgence of any star configuration and many hypersurface configurations. Previously, the only  non-trivial cases for which the resurgence was known were certain monomial ideals and ideals of finite sets of points. 
 
 We also point out a connection to secant varieties of varieties of reducible forms.


\section{The ideal and Hilbert function of a  $\lambda$-configuration of codimension $c$}

Let $R = k[x_0,\dots,x_n] =\oplus_{i\geq 0}R_i$, where $k$ is an arbitrary infinite field, be the standard graded polynomial algebra over $k$.     Recall that a subscheme $V$ of $\PP^n$ is {\em arithmetically Cohen-Macaulay} (ACM) if $R/I_V$ is a Cohen-Macaulay ring, where $I_V$ is the saturated ideal defining $V$.  For a homogeneous  ideal $I \subset R$, the {\em Hilbert function} of $R/I$ is defined by
\[
h_{R/I}(t) = \dim_k [R/I]_t.
\]
When $I = I_V$ is the saturated ideal of a subscheme $V \subset \PP^n$, we usually write $h_V(t)$ for $h_{R/I_V}(t)$. We denote by $\Delta h_{R/I}(t)$ the first difference $h_{R/I}(t) - h_{R/I}(t-1)$, and by $\Delta^2 h_{R/I}(t)$, $\Delta^3 h_{R/I}(t)$, etc. the successive differences.  Suppose that $\delta$ is the Krull dimension of $R/I$.  Then $\Delta^\delta h_{R/I}$ takes on  only finitely many non-zero values.  If we form the vector
$$
(\Delta^\delta h_{R/I}(0), \ldots , \Delta^\delta h_{R/I}(t))
$$
(where the last entry in the vector is the last value of $\Delta^\delta h_{R/I}(t) \neq 0$), then this vector is referred to as the {\em $h$-vector of $R/I$}.  If $I = I_V$ then this vector is called the {\em{$h$-vector of $V$}}.

As in \cite{GHM3}, we will make substantial use of the construction described in the next proposition.  This construction is known in Liaison Theory as {\em Basic Double Linkage} (see \cite[Chapter 4]{KMMNP}). 

\begin{prop} \label{bdgl}
Let $I_C$ be a saturated  ideal defining a  codimension $c$ subscheme $C\subseteq \PP^n$.  
Let $I_S \subset I_C$ be an ideal which defines an ACM subscheme $S$ of codimension $c-1$.  
Let $f$ be a form of degree $d$ which is not a zero-divisor on $R/I_S$.
Consider the ideal $I = f\cdot I_C + I_S$ and let $B$ be the subscheme it defines.

Then $I$ is saturated, hence equal to $I_{B}$, and there is 
an exact sequence
\[
0 \rightarrow I_S(-d) \rightarrow I_C(-d) \oplus I_S \rightarrow I_{B} \rightarrow 0.
\]
In particular, since $S$ is an ACM subscheme of codimension one 
less than $C$, we see that $B$ is an ACM subscheme if and only if $C$ is.  Also, 
\[
\deg B = \deg C + (\deg f)\cdot (\deg S).
\]
Furthermore, let $H_f$ be the 
hypersurface section cut out on $S$ by $f$.  As long as $H_f$ does not  vanish on a component of $C$, we have 
 $B = C \cup H_f$ as schemes.  In any case the Hilbert function 
$h_{B}(t)$ of $R/I_{B}$ is 
\[
h_{B}(t)=h_S(t)-h_S(t-d)+h_C(t-d).
\]
\end{prop}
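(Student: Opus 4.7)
The plan is to first construct the asserted short exact sequence from the definition of $I = fI_C + I_S$, then read the Hilbert function, saturation, and Cohen-Macaulay statements from it via local cohomology, and finally verify the scheme-theoretic union $B = C \cup H_f$.

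For the exact sequence, I would define $\alpha \colon I_S(-d) \to I_C(-d) \oplus I_S$ by $g \mapsto (g, -fg)$ (using $I_S \subseteq I_C$) and $\beta \colon I_C(-d) \oplus I_S \to I$ by $(a, b) \mapsto fa + b$. Injectivity of $\alpha$, surjectivity of $\beta$ onto $I$, and the relation $\beta \alpha = 0$ are immediate. Exactness in the middle is the one place where the hypothesis on $f$ enters: if $\beta(a,b)=0$ then $fa=-b\in I_S$, and since $f$ is a nonzerodivisor on $R/I_S$ we deduce $a\in I_S$ and $(a,b)=\alpha(a)$. Embedding this sequence into the ambient exact sequence $0 \to R(-d) \to R(-d) \oplus R \to R \to 0$ defined by the same formulas and applying the Snake Lemma yields the companion quotient sequence
\[
0 \to (R/I_S)(-d) \to (R/I_C)(-d) \oplus (R/I_S) \to R/I \to 0.
\]

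Taking graded dimensions in each degree gives the Hilbert function identity $h_{R/I}(t) = h_S(t)-h_S(t-d) + h_C(t-d)$. Applying local cohomology $H^i_{\mathfrak{m}}(-)$ produces a long exact sequence. Saturation of $I_C$ gives $H^0_{\mathfrak{m}}(R/I_C)=0$, and since $R/I_S$ is Cohen-Macaulay of Krull dimension $n-c+2 \geq 2$, both $H^0_{\mathfrak{m}}(R/I_S)$ and $H^1_{\mathfrak{m}}(R/I_S)$ vanish; hence $H^0_{\mathfrak{m}}(R/I)=0$, so $I = I_B$ and $h_B = h_{R/I}$. Continuing along the same long exact sequence and using that $H^j_{\mathfrak{m}}(R/I_S) = 0$ for all $j \leq n-c+1$ yields isomorphisms $H^i_{\mathfrak{m}}(R/I_B) \cong H^i_{\mathfrak{m}}(R/I_C)(-d)$ for $0 \leq i \leq n-c$. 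Because $\dim R/I_B = \dim R/I_C = n-c+1$, this range is exactly what is needed to conclude that $R/I_B$ is Cohen-Macaulay if and only if $R/I_C$ is. The degree formula $\deg B = \deg C + (\deg f)(\deg S)$ drops out of the Hilbert function identity by matching leading binomial coefficients of the Hilbert polynomials.

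As sets $V(I) = V(fI_C+I_S) = V(I_C) \cup (V(f)\cap V(I_S)) = C \cup H_f$. For the scheme-theoretic upgrade I would identify $I_B$ with $I_{H_f}\cap I_C$. Since $f$ is regular on the Cohen-Macaulay ring $R/I_S$ of dimension $\geq 2$, the ideal $I_S+(f)$ is already saturated and so equals $I_{H_f}$. The hypothesis that $H_f$ does not vanish on a component of $C$ amounts to $f$ being a nonzerodivisor on $R/I_C$. The inclusion $fI_C + I_S \subseteq (I_S+(f))\cap I_C$ is clear, and for the reverse, any $x = s+fg$ with $s\in I_S$ that lies in $I_C$ satisfies $fg\in I_C$, which forces $g\in I_C$ and hence $x\in fI_C + I_S$; this gives $I_B = I_{H_f}\cap I_C$, so $B = C\cup H_f$ scheme-theoretically. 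The step I expect to be most delicate is pinning down the correct vanishing range in the local cohomology long exact sequence so that saturation and the Cohen-Macaulay equivalence both fall out of one argument, while keeping careful track of which nonzerodivisor hypothesis (on $R/I_S$ for the exact sequence and saturation, on $R/I_C$ for the scheme-theoretic union) is being invoked at each step.
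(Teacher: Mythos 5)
Your proof is correct. The paper itself does not prove this proposition --- it invokes it as the known ``Basic Double Linkage'' construction and cites \cite[Chapter 4]{KMMNP} --- and your argument is essentially the standard one from that source: build the short exact sequence of ideals (using that $f$ is a nonzerodivisor on $R/I_S$ only for exactness in the middle), pass to the quotient sequence, and read off the Hilbert function, saturation, the Cohen--Macaulay equivalence via the local cohomology long exact sequence, and the scheme-theoretic union $I_B = (I_S+(f)) \cap I_C$. Your bookkeeping of which nonzerodivisor hypothesis is used where (on $R/I_S$ for exactness and saturation, on $R/I_C$ for the union statement) is exactly right and is the only delicate point.
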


\begin{rem} In Liaison Theory the scheme $B$ in \ref{bdgl} is often referred to as the {\em basic double link} of $C$ with respect to $f$ and $S$.
\end{rem}

We note that $V$ is an ACM subscheme of codimension zero if and only if $V = \PP^n$ if and only if $I_V = ( 0 )$.  The following is the analogue for $\lambda$-configurations of codimension $c$ of \cite[Proposition 2.9] {GHM3}, which dealt only with {\em linear} star configurations of codimension $c$.

\begin{prop} \label{hf and ideal}
Let $\lambda = [d_1, \dots, d_s]$ and $\mathcal H = \{ F_1,\dots,F_s\}$, where $F_i$ is a hypersurface of degree $d_i$ in $\PP ^n$ (not necessarily reduced), defined by the form $f_i$.  Let $c$ be an integer such that $1 \leq c \leq \min(n,s)$.  Assume that the intersection of any $c+1$ of these hypersurfaces has codimension $c+1$.      Then we have the following facts.

\begin{enumerate}

\item $V_{\lambda,c}$ is ACM.

\item The Hilbert function of $V_{\lambda,c}$ is 
\[
h_{V_{\lambda,c}} (t) = [ h_{V_{\lambda',c-1}}(t) - h_{V_{\lambda',c-1}}(t-d_s) ] + h_{V_{\lambda',c}} (t- d_s).
\]
where $\lambda' = [d_1,\dots,d_{s-1}]$.

\item $\displaystyle \deg V_{\lambda,c} = \sum_{1 \leq i_1 < i_2 < \dots < i_c \leq s} d_{i_1} d_{i_2} \dots d_{i_c}$.

\item The minimal generators of $I_{V_{\lambda,c}}$ are all the products of $s-c+1$ of the forms $f_1,\dots,f_s$. That is, 
\[
I_{V_{\lambda,c}} =  \left  ( \left  \{ f_{i_1} f_{i_2} \cdots f_{i_{s-c+1}} \  | \  {1 \leq i_1 < i_2 < \dots < i_{s-c+1} \leq s} \right \}  \right ).
\]
\end{enumerate}
\end{prop}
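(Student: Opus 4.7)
The plan is to prove the four statements together by induction on $s$, with base case $s = c$. When $s = c$, the scheme $V_{\lambda, c}$ is the single complete intersection $F_1 \cap \cdots \cap F_c$, which is proper by the codimension hypothesis; it is ACM, has ideal generated by the single product $f_1 \cdots f_c$ (matching $s - c + 1 = 1$), has degree $d_1 \cdots d_c$, and has the classical complete intersection Hilbert function, so statements (1)--(4) all hold. For $s > c$ the idea is to recognize $V_{\lambda, c}$ as a Basic Double Link. Writing $\lambda' = [d_1, \ldots, d_{s-1}]$ and sorting the codimension $c$ components of $V_{\lambda, c}$ according to whether they use $F_s$ gives the schematic decomposition
\[
V_{\lambda, c} \;=\; V_{\lambda', c} \;\cup\; \bigl(F_s \cap V_{\lambda', c-1}\bigr),
\]
with the convention $V_{\lambda', 0} = \PP^n$ when $c = 1$.

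The next step is to apply Proposition~\ref{bdgl} with $C := V_{\lambda', c}$, $S := V_{\lambda', c-1}$, and $f := f_s$ of degree $d_s$. The containment $I_S \subseteq I_C$ holds because $V_{\lambda', c} \subseteq V_{\lambda', c-1}$, and by induction $S$ is ACM of codimension $c - 1$. The key hypothesis to check is that $f_s$ is a non-zero-divisor on $R/I_S$, i.e.\ that $F_s$ does not contain any component of $S$. A component of $S$ is a proper complete intersection of some $c - 1$ of the $F_i$ with $i < s$; if $F_s$ contained it, the corresponding intersection of $c$ of the $F_i$ (those $c-1$ together with $F_s$) would have codimension $c - 1$ rather than $c$, contradicting the codimension hypothesis at level $c$ (which follows from the stated hypothesis at level $c + 1$ by monotonicity of codimension). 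Thus Proposition~\ref{bdgl} applies and the basic double link it produces is precisely $V_{\lambda, c}$, with saturated ideal
\[
I_{V_{\lambda, c}} \;=\; f_s \cdot I_{V_{\lambda', c}} \;+\; I_{V_{\lambda', c-1}}.
\]

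From here the four conclusions read off of Proposition~\ref{bdgl}: (1) follows from the ACM-preservation statement together with the inductive ACM-ness of $V_{\lambda', c}$; (2) is exactly the Hilbert function identity applied to this situation; (3) follows from $\deg V_{\lambda, c} = \deg V_{\lambda', c} + d_s \cdot \deg V_{\lambda', c-1}$ by splitting the target sum over $c$-subsets of $\{1, \ldots, s\}$ according to whether $s$ lies in the subset; and for (4), the inductive identifications of the generators of $I_{V_{\lambda', c}}$ as products of $s - c$ forms from $f_1, \ldots, f_{s-1}$ and of $I_{V_{\lambda', c-1}}$ as products of $s - c + 1$ such forms together recover, after multiplying the first list by $f_s$ and taking the union, exactly all products of $s - c + 1$ forms from $\{f_1, \ldots, f_s\}$. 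The main obstacle is the non-zero-divisor verification at the inductive step, which is the one place the codimension hypothesis is genuinely used; once that is in place, the argument is a clean inductive assembly of the three conclusions of Proposition~\ref{bdgl}.
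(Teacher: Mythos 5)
Your proposal is correct and follows essentially the same route as the paper: double induction (on the number of hypersurfaces, with the codimension $c-1$ case available for the deletion step), base cases $s=c$ and $c=1$, and the recognition of $I_{V_{\lambda,c}}$ as the basic double link $f_s\cdot I_{V_{\lambda',c}}+I_{V_{\lambda',c-1}}$ via Proposition~\ref{bdgl}, from which all four statements are read off. You spell out the non-zero-divisor verification and the combinatorial bookkeeping for (3) and (4) in more detail than the paper, which mostly defers to the argument of \cite[Proposition 2.9]{GHM3}, but the substance is identical.
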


\begin{proof}

Nearly the entire proof proceeds exactly as in the proof of \cite [Proposition 2.9]{GHM3}.  As before, the idea is to proceed by induction on $c$ and on $s \geq c$.  For any $c$, if $s = c$ then $V_{\lambda,c}$ is a complete intersection, and all parts are trivial. If $c=1$ and $s$ is arbitrary, then $V_{\lambda,1}$ is the union of $s$ hypersurfaces with no common components, and again all parts are trivial.  Also, (3) is trivial and is included only for completeness.

We now assume that the assertions are true for codimension $c-1$ and all $s$, and for $\lambda$-configurations of codimension $c$ coming from collections of up to $s-1$ hypersurfaces.  Let $\mathcal H' = \{ F_1,\dots,F_{s-1}\}$ and $\lambda^\prime = [d_1,\dots, d_{s-1}]$.  By induction, $V_{\lambda^\prime,c-1}$ and $V_{\lambda^\prime,c}$ are both ACM and the ideals are of the stated form.  Furthermore, $f_s$ is not a zero divisor on $R/I_{V_{\lambda^\prime,c-1}}$.  By Proposition \ref{bdgl}, 
\[
I_{V_{\lambda,c}} = f_s \cdot I_{V_{\lambda^\prime,c}} + I_{V_{\lambda^\prime,c-1}},
\]
and $V_{\lambda,c}$ is ACM.  This is (1).  Statements (2) and (4) also follow immediately from this construction of the ideal, again using induction and Proposition \ref{bdgl}.      
\end{proof}

\noindent We note that (4) was shown in \cite{PS}.

\begin{cor} \label{hvect}
Let $\lambda = [d_1,\dots,d_s]$ and $\lambda' = [d_1,\dots,d_{s-1}]$.  Then for any $i \geq 1$ we have
\[
\Delta^i h_{V_{\lambda,c}} (t) = [ \Delta^i h_{V_{\lambda',c-1}}(t)  - \Delta^i h_{V_{\lambda',c-1}}(t - d_s) ]
+ \Delta^i h_{V_{\lambda',c}}(t - d_s).
\]
In particular, let $X$ be the hypersurface section of $V_{\lambda',c-1}$ by $F_s$.  Let $\underline{h}_{V_{\lambda',c}}$ be the $h$-vector of $V_{\lambda',c}$,  $\underline{h}_{V_{\lambda,c}}$  the $h$-vector of $V_{\lambda,c}$, and   $\underline{h}_{X}$  the $h$-vector of $X$.  Then
\[
\underline{h}_{V_{\lambda,c}} = \underline{h}_X + \underline{h}_{V_{\lambda',c}}(-d_s).
\]

\end{cor}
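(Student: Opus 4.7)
The first assertion is essentially formal: the operator $\Delta$ is linear on functions $\Z\to\Z$ and commutes with integer translation of the argument, so $\Delta^i$ does as well. Therefore I would simply take the identity
\[
h_{V_{\lambda,c}}(t) = \bigl[h_{V_{\lambda',c-1}}(t) - h_{V_{\lambda',c-1}}(t-d_s)\bigr] + h_{V_{\lambda',c}}(t-d_s)
\]
supplied by Proposition~\ref{hf and ideal}(2) and apply $\Delta^i$ termwise to obtain the stated recursion for any $i\ge 1$.

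For the second assertion, I first want to identify the bracket with the $h$-vector of the hypersurface section $X$. Since $V_{\lambda',c-1}$ is ACM (Proposition~\ref{hf and ideal}(1)) and $f_s$ is a non-zerodivisor on $R/I_{V_{\lambda',c-1}}$ (the intersection of any $c$ of the hypersurfaces has the expected codimension $c$), the short exact sequence
\[
0 \to (R/I_{V_{\lambda',c-1}})(-d_s) \stackrel{\cdot f_s}{\longrightarrow} R/I_{V_{\lambda',c-1}} \to R/(I_{V_{\lambda',c-1}} + (f_s)) \to 0
\]
shows that $R/(I_{V_{\lambda',c-1}} + (f_s))$ is Cohen-Macaulay of Krull dimension $n+1-c$, hence saturated, so it equals $R/I_X$. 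This yields $h_X(t) = h_{V_{\lambda',c-1}}(t) - h_{V_{\lambda',c-1}}(t-d_s)$, and also confirms that $X$ has codimension $c$, the same as $V_{\lambda,c}$ and $V_{\lambda',c}$.

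Having matched Krull dimensions, I would then specialize the recursion in (1) to $i = n+1-c$. The left-hand side becomes $\underline{h}_{V_{\lambda,c}}(t)$ by definition, and the last summand becomes $\underline{h}_{V_{\lambda',c}}(t-d_s)$. Applying $\Delta^{n+1-c}$ to the identity $h_X(t) = h_{V_{\lambda',c-1}}(t) - h_{V_{\lambda',c-1}}(t-d_s)$ just derived shows that the bracketed expression is exactly $\underline{h}_X(t)$, giving the claimed decomposition $\underline{h}_{V_{\lambda,c}} = \underline{h}_X + \underline{h}_{V_{\lambda',c}}(-d_s)$. The only point requiring care is the dimension bookkeeping: $V_{\lambda',c-1}$ has Krull dimension one larger than the other three schemes, so one must apply the correct number of differences to convert its Hilbert function shift into an $h$-vector-level statement; this is handled automatically by cutting with the non-zerodivisor $f_s$.
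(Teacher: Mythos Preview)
Your argument is correct and follows the same route as the paper: apply $\Delta^i$ termwise to the Hilbert-function recursion of Proposition~\ref{hf and ideal}(2), then specialize to $i=n+1-c$ and use the dimension count $\dim V_{\lambda,c}=\dim V_{\lambda',c}=n-c$, $\dim V_{\lambda',c-1}=n-c+1$. The paper's proof is a two-line sketch that leaves the identification of the bracket with $\underline{h}_X$ implicit; you have supplied that step explicitly via the short exact sequence for multiplication by $f_s$, which is a welcome clarification but not a different method.
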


\begin{proof}
The first part is immediate and the second part comes from taking $i = n-c+1$, and remembering that $\dim V_{\lambda,c} = \dim V_{\lambda',c} = n-c$, while $\dim V_{\lambda',c-1} = n-c+1$.
\end{proof}

\begin{example}
We illustrate the $h$-vector computation from Corollary \ref{hvect}.  Suppose $n = 3$,  $\lambda = [4,3,3,2]$, and consider $c=2$ and $c=3$.  Let us compute the $h$-vectors.   We first find the $h$-vectors of the successive codimension 2 hypersurface configurations in $\PP^3$.  The integer in column $t$ (starting with $t = 0$) represents the value of the $h$-vector in degree $t$.

The first scheme, $V_{(4,3),2}$, is a complete intersection of degree 12, so the $h$-vector is well known:
\[
\begin{array}{rllllllllllllll}
V_{(4,3),2} : & 1 & 2 & 3 & 3 & 2 & 1  
\end{array}
\]
To compute the $h$-vector of $V_{(4,3,3),2}$, note that $V_{(4,3),1}$  is a hypersurface of degree $4+3 = 7$, and we are cutting it with a hypersurface of degree 3 to obtain $X$. Thus we have the following $h$-vector computation:
\[
\begin{array}{rrrrrrrrrrrrrrrrrr}
 & V_{(4,3),2} (-3): & - & - & - & 1 & 2 & 3 & 3 & 2  & 1  \\
 & {X}: & 1 & 2 & 3 & 3 & 3 & 3 & 3 & 2 & 1  &  \\ \hline
&V_{(4,3,3),2}: & 1 & 2 & 3 & 4 & 5 & 6 & 6 & 4 & 2 
\end{array}
\]
Next, we compute the $h$-vector of $V_{(4,3,3,2),2}$. Now $X$ is the complete intersection of a hypersurface of degree $4+3+3=10$ and one of degree 2.
\[
\begin{array}{rrrrrrrrrrrrrrrrrr}
 & V_{(4,3,3),2} (-2): & - & - & 1 & 2 & 3 & 4 & 5 & 6 & 6 & 4 & 2  \\
 & X : &  1 & 2 & 2 & 2 & 2 & 2 & 2 & 2 & 2 & 2 & 1 \\ \hline
&V_{(4,3,3,2),2}: &1 & 2 & 3 & 4 & 5 & 6 & 7 & 8 & 8 & 6 & 3
\end{array}
\]
We now turn to the $h$-vectors  of  codimension 3 hypersurface configurations.  The first, $V_{(4,3,3),3}$, is again a complete intersection, so its $h$-vector is
\[
\begin{array}{rlllllllllllllll}
V_{(4,3,3),3}: & 1 & 3 & 6  & 8 & 8 & 6 & 3 & 1.
\end{array}
\]
Now $X$ is the hypersurface section of $V_{(4,3,3),2}$ by $F_4$, which has degree 2.  To compute the $h$-vector of $X$ we first ``integrate'' the $h$-vector of $V_{(4,3,3),2}$, and then we take a shifted difference:
\[
\begin{array}{rrrrrrrrrrrrrrrrrr}
& 1 & 3 & 6 & 10 & 15 & 21 & 27 & 31 & 33 & 33 & 33 & 33 &  \cdots   \\
& - & - & 1 & 3 & 6 & 10 & 15 & 21 & 27 & 31 & 33 & 33 & \cdots \\ \hline
X: & 1 & 3 & 5 & 7 & 9 & 11 & 12 & 10 & 6 & 2
\end{array}
\]
Finally, we apply Corollary \ref{hvect}:
\[
\begin{array}{rrrrrrrrrrrrrrrrrr}
 & V_{(4,3,3),3} (-2): & - & - &  1 & 3 & 6  & 8 & 8 & 6 & 3 & 1 \\
 & X : &  1 & 3 & 5 & 7 & 9 & 11 & 12 & 10 & 6 & 2 \\ \hline
&& 1 & 3 & 6 & 10 & 15 & 19 & 20 & 16 & 9 & 3 
\end{array}
\]

\end{example}


\section{Specializations of Matroid Ideals and their Symbolic Powers}

We begin with a lemma, whose proof was suggested to us by L. Avramov. 

\begin{lem} \label{lucho}
Let $S = k[y_1,\ldots,y_s]$ and  $R = k[x_0,...,x_n]$ be polynomial rings over
a field $k$. Let $f_1,\dots,f_s \in R$ be an $R$-regular sequence of homogeneous
elements of the same degree (with respect to the standard grading). Let $I = (g_1,...,g_r)$
be a homogeneous ideal in $S$, so each $g_i$ is a polynomial $g_i = g_i
(y_1,...,y_s)$. Let $p_i = g_i(f_1,...,f_s)$ and let $J = (p_1,...,p_r)$.
Then $I$ and $J$ have the same graded Betti numbers over $S$ and $R$, respectively, except possibly with shifts
which depend on the degrees of the $f_i$.  In particular, $S/I$ is Cohen-Macaulay
if and only if $R/J$ is Cohen-Macaulay.

If $I$ is a monomial ideal then we can drop the requirement that the $f_i$ all have the same degree.
\end{lem}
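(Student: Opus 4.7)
The strategy is to exploit the $k$-algebra substitution map $\phi \colon S \to R$, $y_i \mapsto f_i$. Regrading $S$ so that $\deg y_i = d$ (the common degree of the $f_i$) makes $\phi$ a homomorphism of graded $k$-algebras, so $R$ becomes a graded $S$-module via $\phi$, with $J = \phi(I) R$ the extension of $I$. Everything in the lemma will follow once we show that $R$ is flat (in fact free) as a graded $S$-module under this structure.

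The flatness step uses the Koszul complex. The complex $K_\bullet(y_1, \ldots, y_s; S)$ is an $S$-free resolution of $k = S/S_+$, since $y_1, \ldots, y_s$ is an $S$-regular sequence. Tensoring with $R$ via $\phi$ produces $K_\bullet(f_1, \ldots, f_s; R)$, which by the $R$-regular-sequence hypothesis is exact in positive homological degrees. Hence $\operatorname{Tor}_i^S(R, k) = 0$ for all $i \geq 1$, and the graded version of the local criterion of flatness (needed because $R$ is typically not finitely generated over $S$) lets us conclude that $R$ is $S$-flat.

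With flatness in hand, I would take a minimal graded free resolution $F_\bullet \to S/I$ over $S$ and tensor with $R$ to obtain an exact complex of graded free $R$-modules resolving $(S/I) \otimes_S R = R/J$. The ranks of the free modules are preserved, while internal degree shifts are multiplied by $d$. Minimality persists because the entries of the differentials lie in $S_+$, whose image under $\phi$ lies in $R_+$, so no unit entries are created. Consequently the graded Betti numbers of $S/I$ over $S$ and of $R/J$ over $R$ coincide up to rescaling of internal degrees by $d$. The Cohen--Macaulay equivalence is then immediate: by Auslander--Buchsbaum the two projective dimensions are equal, and $\dim R - \dim S = n+1-s$ equals $\dim R/J - \dim S/I$ (the dimension of the flat fibre $R/(f_1, \ldots, f_s) R$), so $S/I$ attains its maximal depth if and only if $R/J$ does.

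The monomial case is handled by re-grading: if $I$ is generated by monomials, then $I$ is homogeneous for any weighting of the $y_i$, so one may simply declare $\deg y_i = d_i := \deg f_i$. Under this weighted grading $\phi$ is again a graded $k$-algebra map, $y_1,\ldots,y_s$ is still an $S$-regular sequence, and the Koszul/flatness argument goes through verbatim, so the common-degree hypothesis becomes dispensable. The main obstacle I anticipate is carrying out the flatness step rigorously for a potentially non-finitely-generated $S$-module $R$, which requires the graded local flatness criterion rather than the more familiar ``$\operatorname{Tor}_1=0$ implies free'' argument available for finitely generated graded modules.
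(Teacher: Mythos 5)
Your proposal is correct and follows essentially the same route as the paper: define the substitution homomorphism $\ffi: S\to R$, $y_i\mapsto f_i$, observe it is flat because the $f_i$ form a regular sequence, and tensor a minimal graded free resolution of $S/I$ to get one of $R/J$. The paper simply asserts the flatness, whereas you justify it (correctly) via the Koszul complex and the local criterion of flatness, and you also spell out the Auslander--Buchsbaum/fibre-dimension bookkeeping for the Cohen--Macaulay equivalence.
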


\begin{proof}
We require the $f_i$ to have the same degree in order that the $g_i$ continue to be homogeneous, and also so that the maps in the minimal free resolution continue to be graded.  When $I$ is monomial, this restriction is not needed.  

Define a homomorphism of $k$-algebras  $\ffi : S \rightarrow R$ by $\ffi(y_i) = f_i$ for  $i=1,\dots,s$.  
It is flat because the $f_i$ form a regular sequence.  Let $\FF$ be a graded 
minimal free  resolution of $S/I$ over $S$. Then $R\otimes_S \FF$ is a graded minimal
free resolution of $R/J$ over $R$.  
\end{proof}

\begin{example}
Take  $\deg(f_i) = 2$ for all $i$ and suppose the Betti diagram for $R/I$ is the one on the left in Figure \ref{SpecFig}.   Then the Betti diagram for $R/J$ is the one on the right in Figure \ref{SpecFig}.

{ \begin{figure}[htbp]
\begin{center}
{
\hbox{\hspace{1in}\vbox{
\begin{verbatim}

               0  1  2 3             0  1  2 3
        total: 1 10 12 3      total: 1 10 12 3
            0: 1  .  . .          0: 1  .  . .
            1: .  .  . .          1: .  .  . .
                  ...                   ...
            7: .  .  . .         16: .  .  . .
            8: .  4  . .         17: .  4  . .
            9: .  3  6 .         18: .  .  . .
           10: .  2  4 2         19: .  3  . .
           11: .  1  2 1         20: .  .  6 .
                                 21: .  2  . .
                                 22: .  .  4 .
                                 23: .  1  . 2
                                 24: .  .  2 .
                                 25: .  .  . 1
\end{verbatim}
}}}
\caption{Comparing a Betti diagram with that of a specialization.}
\label{SpecFig}
\end{center}
\end{figure}

}

\end{example}

We now recall a few concepts for simplicial complexes. 
A {\em matroid} $\Delta$  on a vertex set $[s] = \{1,2,\ldots,s\}$ is a non-empty collection of subsets of $[s]$ that is closed under inclusion and satisfies the following property: If $F, G$ are in $\Delta$ and $|F| > |G|$, then there is some $j \in F$ such that $G \cup \{j\}$ is in $\Delta$. We will always consider $\Delta$ as a simplicial complex. Equivalently, a matroid is a simplicial complex $\Delta$ such that, for every subset $F \subseteq [s]$, the restriction $\Delta|_F = \{G \in \Delta \; | \; G \subseteq F\}$ is pure, that is, all its facets have the same dimension.  

  For a subset $F
\subseteq [s]$, we
write $y_F$ for the squarefree monomial $\prod_{i \in F} y_i$.
The {\em Stanley-Reisner ideal} of $\Delta$ is
$I_\Delta =(y_F \ | \ F \subseteq [s],\ F\not\in \Delta)$
and the corresponding {\em Stanley-Reisner ring} is $k[\Delta]=S/I_\Delta$, where $S = k[ y_1,\ldots,y_s]$. It is Cohen-Macaulay. In fact, it was shown in \cite[Theorem 3.3]{NRo} that matroid complexes are, what is referred to there as {\em squarefree glicci simplicial complexes} (see \cite{NRo} for the definition). We now explain this result in a more detailed way. 

Let $\Delta$ be any simplicial complex on $[s]$. Each subset  $F \subseteq [s]$ induces the  following simplicial subcomplexes of $\Delta$:
the {\em link of $F$}
$$
\lk F = \{G \in \Delta \  | \ F \cup G \in \Delta, F \cap G = \emptyset
\},
$$
and the {\em deletion}
$$
\Delta_{-F} = \{G \in \Delta \ | \ F \cap G = \emptyset  \}.
$$
For each vertex $j$ of $\Delta$, the link $\lk j$ and the deletion $\Delta_{-j}$ are simplicial complexes on $[s] \setminus \{j\}$. Moreover, if $\Delta$ is a matroid, then  $\lk j$ and  $\Delta_{-j}$ are again matroids (see, e.g., \cite{Oxley-book-92}), where $\dim S/I_{\Delta_{-j}} S = \dim S/I_{\Delta} + 1$.  Furthermore $y_j$ is not a zerodivisor on $S/ I_{\Delta_{-j}}$ and (see \cite[Remark 2.4]{NRo}) 
\begin{equation}
    \label{eq:squarefree BDL}
I_{\Delta} = y_j I_{\lk j} S +  I_{\Delta_{-j}} S. 
\end{equation} 
It follows that $I_{\Delta}$ is a basic double link of $I_{\lk j} S$ with respect to $y_j$ and $I_{\Delta_{-j}}S$, as in Proposition \ref{bdgl}.   Replacing $I_{\Delta}$ by $I_{\lk j}$, and iterating, one sees that $I_{\Delta}$ can be obtained from a complete intersection generated by variables via  a series of basic double links through squarefree monomial ideals. This means that $I_{\Delta}$ is squarefree glicci. 

We use these facts to establish the following result. 

\begin{thm}
    \label{thm:matroid spec}
Let $\Delta$ be a matroid on $[s]$  of dimension $s-1 -c$, and let $P_1,\ldots,P_t$ be the associated prime ideals of $k[\Delta]$.  Assume $n \geq c$ and that  $f_1,\ldots,f_s \in R = k[x_0,\ldots,x_n]$ are homogeneous polynomials such that any subset of at most $c+1$ of them forms an $R$-regular sequence. Consider the ring homomorphism 
\[
\ffi: S = k[y_1,\ldots,y_s] \to R, \; y_i \mapsto f_i. 
\]
If $I$ is an ideal of $S$ we will write $\ffi_*(I)$ to denote the ideal in $R$ generated by $\ffi(I)$.
Then the following facts are true. 
\begin{enumerate}

\item The ideal $\ffi_* (I_{\Delta})$ is a Cohen-Macaulay ideal  of codimension $c$. 

\item ${\displaystyle  \ffi_* (I_{\Delta}) = \bigcap_{i=1}^t \ffi_* (P_i)}$. 

\item If $\F_{k[\Delta]}$ is a graded minimal free resolution of $k[\Delta]$ over $S$, then $\F_{k[\Delta]} \otimes_S R$ is a graded minimal free resolution of $R/\ffi_* (I_{\Delta})$ over $R$. 

\end{enumerate} 
\end{thm}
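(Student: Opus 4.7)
The plan is to establish parts (1), (2), and (3) simultaneously by induction on the number of vertices $s$ of $\Delta$, using the basic double link structure encoded in equation (\ref{eq:squarefree BDL}) together with Proposition \ref{bdgl}. The base cases are $c=0$ (where $I_\Delta=(0)$ and every assertion is trivial) and $s=c$ (where $\Delta=\{\emptyset\}$, $I_\Delta=(y_1,\dots,y_s)$, and $\varphi_*(I_\Delta)=(f_1,\dots,f_c)$ is a complete intersection by hypothesis, so (1)--(3) all hold directly). For the inductive step I select a vertex $j\in[s]$ that is neither a loop nor a coloop of $\Delta$; such a $j$ exists whenever $c\ge 1$ and $\Delta$ is loop-free (the loop case is handled analogously by writing $\varphi_*(I_\Delta)=(f_j)+\varphi_*(I_{\Delta_{-j}})$ and reducing to a matroid on fewer vertices). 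For such $j$, the link $\lk j$ is a matroid on $[s]\setminus\{j\}$ of codimension $c$ and the deletion $\Delta_{-j}$ is a matroid on the same set of codimension $c-1$; the family $\{f_i:i\neq j\}$ continues to satisfy the regular-sequence hypothesis with the same bound $c+1$, which suffices for both reduced codimensions, so induction delivers all three conclusions for these two smaller matroids.

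Applying $\varphi_*$ to (\ref{eq:squarefree BDL}) produces $\varphi_*(I_\Delta)=f_j\cdot\varphi_*(I_{\lk j})+\varphi_*(I_{\Delta_{-j}})$, which I want to recognize as a basic double link in $R$ in order to apply Proposition \ref{bdgl}. The key verification is that $f_j$ is a nonzerodivisor on $R/\varphi_*(I_{\Delta_{-j}})$: by induction this quotient is Cohen-Macaulay of codimension $c-1$, and part (2) identifies its associated primes as the ideals $\varphi_*(Q)$ where $Q$ ranges over the minimal primes of $I_{\Delta_{-j}}$. Each such $\varphi_*(Q)$ is generated by $c-1$ of the forms $\{f_i:i\neq j\}$, and adjoining $f_j$ yields a regular sequence of length $c\le c+1$ by hypothesis; in particular $f_j$ lies in no associated prime. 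Proposition \ref{bdgl} then yields (1). For (2), the same proposition supplies the scheme-theoretic decomposition $\varphi_*(I_\Delta)=\varphi_*(I_{\lk j})\cap(\varphi_*(I_{\Delta_{-j}})+(f_j))$, whose applicability requires that $f_j$ not vanish on any component of $V(\varphi_*(I_{\lk j}))$; each such component arises from an associated prime generated by $c$ of $\{f_i:i\neq j\}$, and together with $f_j$ these form a regular sequence of length $c+1$. Combining with the inductive form of (2) then identifies $\varphi_*(I_{\lk j})=\bigcap\varphi_*(P_i)$ over facets of $\Delta$ containing $j$ and, after adjoining $(f_j)$, identifies $\varphi_*(I_{\Delta_{-j}})+(f_j)$ with the intersection of the $\varphi_*(P_i)$ over facets not containing $j$, yielding the primary decomposition in (2).

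For (3), I exploit the short exact sequence from Proposition \ref{bdgl},
\[
0\to\varphi_*(I_{\Delta_{-j}})(-d_j)\to\varphi_*(I_{\lk j})(-d_j)\oplus\varphi_*(I_{\Delta_{-j}})\to\varphi_*(I_\Delta)\to 0
\]
with $d_j=\deg f_j$, paired with the analogous sequence in $S$ (with $d_j$ replaced by $1$) whose mapping cone yields the minimal resolution $\F_{k[\Delta]}$, the minimality being precisely the squarefree-glicci content of \cite[Theorem 3.3]{NRo}. By induction, part (3) supplies minimal resolutions $\F_{\lk j}\otimes_S R$ and $\F_{\Delta_{-j}}\otimes_S R$ of the two outer terms in $R$; the mapping cone in $R$ therefore produces a free resolution of $\varphi_*(I_\Delta)$, and it coincides with $\F_{k[\Delta]}\otimes_S R$ because applying $\varphi$ entrywise to the mapping-cone matrices in $S$ and shifting twists by the $\deg f_i$ recovers exactly these matrices in $R$. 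Minimality persists because $\varphi$ sends nonunit homogeneous entries to nonunit homogeneous entries, so no new cancellations appear. The main obstacle is that Lemma \ref{lucho} does not apply directly: only subsets of $c+1$ of the $f_i$ are assumed to form regular sequences, so $\varphi$ need not be flat; the inductive mapping-cone argument circumvents this by calling upon the regular-sequence hypothesis only in the bounded length needed at each basic double link step, namely on $f_j$ together with the $c$ generators of a single associated prime.
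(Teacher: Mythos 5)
Your treatment of parts (1) and (2) is essentially the paper's own argument: induction using the basic double link $I_{\Delta} = y_j I_{\lk j}S + I_{\Delta_{-j}}S$ together with Proposition \ref{bdgl}, with the nonzerodivisor condition on $f_j$ verified just as you do. (The paper closes part (2) by comparing degrees of two unmixed ideals of the same codimension rather than by tracking components through $B = C\cup H_f$; both routes work, though your step distributing $(f_j)$ over the intersection of the $\varphi_*(P'_j)$ deserves the same unmixedness-plus-degree justification the paper gives.) The genuine problem is part (3). Your argument rests on the assertion that the mapping cone over the basic double link sequence in $S$ \emph{is} the minimal free resolution $\F_{k[\Delta]}$, ``the minimality being precisely the squarefree-glicci content of \cite{NRo}.'' Neither half of this is correct: glicciness concerns Gorenstein liaison classes and says nothing about minimality of mapping cones, and the mapping cone is in fact not minimal in general. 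Concretely, take the rank-two matroid $U_{1,2}\oplus U_{1,2}$ on $[4]$ (parallel pairs $\{1,2\}$ and $\{3,4\}$), so $s=4$, $c=2$, $I_{\Delta}=(y_1y_2,\,y_3y_4)$. The vertex $4$ is neither a loop nor a coloop, so your selection rule allows it; here $I_{\lk 4}S=(y_1y_2,\,y_3)$ and $I_{\Delta_{-4}}S=(y_1y_2)$, and any lift of the inclusion $(y_1y_2)\hookrightarrow (y_1y_2,\,y_3)$ to free covers sends the generator $y_1y_2$ to $1\cdot y_1y_2+0\cdot y_3$, a unit entry. The resulting mapping cone has Betti numbers $(3,2)$, whereas the minimal resolution is the Koszul complex with Betti numbers $(2,1)$. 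So the identification of the mapping cone in $R$ with $\F_{k[\Delta]}\otimes_S R$ fails, and with it your proof of (3).

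The gap is repairable but requires a step you have not supplied: since $\varphi$ fixes scalars it carries unimodular matrices to unimodular matrices, so the splitting of the (possibly non-minimal) mapping cone over $S$ as $\F_{k[\Delta]}$ plus a trivial complex is transported by $\varphi$ to a splitting of the mapping cone over $R$, whose trivial summand can then be cancelled to leave $\F_{k[\Delta]}\otimes_S R$ as a minimal free resolution. For comparison, the paper proves (3) by an entirely different device: it passes to $T=R[z]$ with general forms $f_i'\in(f_i,z)$ satisfying the regular-sequence condition for subsets of size $c+2$, inducts on $s-(c+1)$ down to the fully flat case handled by Lemma \ref{lucho}, and then cuts back down by the nonzerodivisor $z$. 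You should either adopt that deformation argument or add the cancellation-of-trivial-summands step; as written, part (3) is not proved.
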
    

The ideal $\ffi_* (I_{\Delta})$ is said to be obtained by {\em specialization} from the matroid ideal $I_{\Delta}$.  The subscheme of $\PP^n$ defined by $\ffi_* (I_{\Delta})$ is called a {\em matroid configuration}.

\begin{proof} 
We begin by showing the first two claims. We use induction on $c \ge 1$. If $c = 1$, then $I_{\Delta}$ is a principal ideal, and the assertions are clearly true. 

Let $c \ge 2$. Now we use induction on $s \ge c$. If $s = c$, then $I_{\Delta}$ is a complete intersection, and again the claims are clear. 

Let $s > c$.   As pointed out above, $\lk s$ and $\Delta_{-s}$ are matroids on $[s-1]$, and their Stanley-Reisner ideals have codimensions $c$ and $c-1$, respectively. Thus claims (1) and (2) hold true for these ideals by the induction hypothesis. The assumption on the forms  $f_i$ gives 
\[
\ffi_* (I_{\lk s} S)  : f_s = \ffi_* (I_{\lk s} S). 
\]
Moreover, Relation \eqref{eq:squarefree BDL} yields 
\[
\ffi_* (I_{\Delta}) = f_s \ffi_*( I_{\lk s} S) +  \ffi_*( I_{\Delta_{-j}} S). 
\]
Hence $\ffi_* (I_{\Delta})$ is a basic double link of the Cohen-Macaulay ideal $\ffi_*( I_{\lk s} S)$, and thus it is Cohen-Macaulay of codimension $c$, proving (1). 

To show (2), denote by $P_1,\ldots,P_u$ the associated prime ideals of $k[\Delta]$ that do not contain $y_s$. For $j = u+1,\ldots,t$, define monomial prime ideals $P'_j$  generated by variables in $\{y_1,\ldots,y_{s-1}\}$   by $P_j = y_s R + P_j'$. Then 
\[
I_{\lk s} S = \bigcap_{j=1}^u P_j \quad \text{ and } \quad  I_{\Delta_{-j}} S =  \bigcap_{j= u+1}^t P'_j. 
\]
Moreover, since $I_{\Delta}$ is squarefree, we have 
\[
I_{\Delta} = I_{\lk s} S \cap (y_s,  I_{\Delta_{-j}}) S. 
\]
Applying the homomorphism $\ffi$ we obtain
\[
\ffi_* (I_{\Delta}) \subset \ffi_* (I_{\lk s} S) \cap \ffi_* ((y_s,  I_{\Delta_{-j}}) S) \subset \bigcap_{j=1}^u \ffi_* (P_j) \ \cap \  \bigcap_{j= u+1}^t (f_s, \ffi_*(P'_j)) =  \bigcap_{j=1}^t \ffi_* (P_j).  
\]
Notice that the ideal on the right-hand side is unmixed and  has degree $\deg (I_{\lk s}) + \deg (f_s) \cdot \deg (I_{\lk s})$. Since $I_{\Delta}$ is also an unmixed ideal with the same degree, the two ideals must be equal, completing the argument for (2). 

Finally, we show Claim (3). Let us say that the polynomials $f_i$ satisfy property $(P_m)$ if any subset if at most $m+1$ of them forms an $R$-regular sequence. If the $f_i$ satisfy property $(P_{s})$, then Claim (3) is a consequence of Lemma \ref{lucho}. 

Let $s > c+1$. We use induction on the difference between $s$ and the number $c+1$, as determined by the assumption on the forms $f_i$.  The idea is to replace the given forms $f_i$ by new forms, satisfying a stronger condition. 
By induction, we know that Claim (3) is true if we substitute for the $y_i$ forms in any polynomial ring  such that any subset of at most $c+2$ of these polynomials forms a regular sequence.  
So let $z$ be a new variable and define a polynomial ring $T = R[z]$. For each $i \in [s]$, let $f'_i \in (f_i, z) T$ be a general polynomial of degree $\deg f_i$. Now consider the homomorphism 
\[
\gamma: S \to T, \ y_i \mapsto f'_i. 
\]
Observe that any subset of at most $c+2$ of the polynomials $f'_i$ forms  a $T$-regular sequence. Thus, 
the induction hypothesis gives that $\F_{k[\Delta]} \otimes_S T$ is a graded minimal free resolution of $T/\gamma_* (I_{\Delta})$ over $T$. We have the following graded isomorphism
\[
T/(\gamma_* (I_{\Delta}), z) \cong R/\ffi_* (I_{\Delta}). 
\]
Since $T/\gamma_* (I_{\Delta})$ is Cohen-Macaulay and $\dim T/\gamma_* (I_{\Delta}) = 1 + \dim R/\ffi_* (I_{\Delta}) $, $z$ is not a zerodivisor of $T/\gamma_* (I_{\Delta})$. It follows that $\F_{k[\Delta]} \otimes_S T \otimes_T T/zT \cong \F_{k[\Delta]} \otimes_S R$ is a graded minimal free resolution of $k[\Delta]$ over $R$, as claimed. 
\end{proof}

\begin{example}
     \label{exa:d-stars}
Consider the ideal of $S$
\[
I_{s, c} = \bigcap_{1 \le i_1 < i_2 < \cdots < i_c \le s} (y_{i_1}, y_{i_2},\ldots,y_{i_c}), 
\]
generated by all products of $s-c+1$ distinct variables in $\{y_1,\ldots,y_s\}$.    It is the Stanley-Reisner ideal of a uniform matroid on $[s]$ whose facets are all the cardinality $s-c$ subsets of $[s]$. Hence, with the hypotheses of  Theorem \ref{thm:matroid spec},  every   specialization $\ffi_* (I_{s, c})$ is again Cohen-Macaulay of codimension $c$ and 
\[
\ffi_* (I_{s, c}) = \bigcap_{1 \le i_1 < i_2 < \cdots < i_c \le s} (f_{i_1}, f_{i_2},\ldots,f_{i_c}). 
\]
Note that $\ffi_* (I_{s, c})$ is the ideal of a hypersurface configuration in $\PP^n$ and that any hypersurface configuration arises in this way. 
\end{example}

Observe that the Alexander dual of $I_{s, c}$ is $I_{s, s-c+1}$. Since each is the dual of the other and both are Cohen-Macaulay, both ideals have a linear free resolution (see \cite{ER}). This also follows from the fact that $I_{s, c}$ is a squarefree strongly stable ideal. Extending results in \cite{CN}, it was shown in \cite{NRe} that all squarefree strongly stable ideals that are generated in one degree have a linear cellular minimal free resolution that can be explicitly described using a complex of boxes. It turns out that in the case of the ideal $I_{s,c}$, this complex of boxes can be realized as a subdivision of a simplex on $c$ vertices. 

Now, applying Theorem \ref{thm:matroid spec}, we obtain the following result about hypersurface configurations.  

\begin{cor}
 Each specialization $\ffi_* (I_{s, c})$ admits an explicit graded minimal free resolution, including a description of the maps, that stems from a cellular resolution of $I_{s, c}$. 
\end{cor}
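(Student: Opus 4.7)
The plan is to combine part (3) of Theorem \ref{thm:matroid spec} with the explicit cellular minimal free resolution of $I_{s,c}$ supplied by the results of \cite{NRe} cited just before the corollary. Since $I_{s,c}$ is the Stanley--Reisner ideal of the uniform matroid on $[s]$ whose facets are the cardinality $s-c$ subsets of $[s]$ (Example \ref{exa:d-stars}), Theorem \ref{thm:matroid spec}(3) applies: any graded minimal free $S$-resolution $\mathbb{F}$ of $S/I_{s,c}$ pulls back along the specialization map $\ffi$ to a graded minimal free $R$-resolution $\mathbb{F} \otimes_S R$ of $R/\ffi_*(I_{s,c})$. The strategy is thus to write down $\mathbb{F}$ explicitly and then track the effect of $-\otimes_S R$ on its free modules and differentials.

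For the first task I would simply invoke \cite{NRe}: because $I_{s,c}$ is squarefree strongly stable and generated in a single degree $s-c+1$, it admits a linear cellular minimal free resolution $\mathbb{F}$ supported on a polyhedral complex $\mathcal{B}$ of boxes. The paragraph preceding the corollary records the further fact that, specifically for $I_{s,c}$, this complex of boxes is identified with an explicit subdivision of the simplex on $c$ vertices. Thus the cells, the free modules, and the boundary maps (each a signed monomial in the $y_j$ determined by the attaching map of a cell in $\mathcal{B}$) are all combinatorially explicit.

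To finish, I would carry out the substitution $y_j \mapsto f_j$ cell by cell. Each cellular label becomes the corresponding product of the $f_j$; each signed monomial entry in a boundary map becomes the signed product of the appropriate $f_j$; and the shifts, which over $S$ progress linearly as $s-c+1, s-c+2, \dots$, become, for each cell, the sum $\sum d_j = \sum \deg f_j$ over the variables indexing that cell. Theorem \ref{thm:matroid spec}(3) guarantees that the resulting complex is still exact and minimal, so it is a graded minimal free $R$-resolution of $R/\ffi_*(I_{s,c})$ whose maps are explicitly inherited from the cellular resolution of $I_{s,c}$. The only real bookkeeping, and the nearest thing to an obstacle, is tracking the new non-uniform shifts introduced by unequal degrees $d_j$; this is purely combinatorial once the cellular labels of \cite{NRe} are at hand.
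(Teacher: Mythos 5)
Your proposal matches the paper's argument exactly: the corollary is obtained by combining Theorem \ref{thm:matroid spec}(3) with the cellular (complex-of-boxes) minimal free resolution of $I_{s,c}$ from \cite{NRe}, and the substitution $y_j \mapsto f_j$ in the cellular labels and boundary maps is precisely the intended content. The only addition you make --- tracking the non-uniform shifts when the $\deg f_j$ differ --- is harmless bookkeeping already covered by the statement of Theorem \ref{thm:matroid spec}(3).
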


The graded Betti numbers in the resolution of $\ffi_* (I_{s, c})$ (but not the maps) have been determined in \cite{PS}. 

Theorem \ref{thm:matroid spec} can be extended to symbolic powers.

\begin{thm}
    \label{thm:symb powers}
Adopt the notation and assumptions of Theorem \ref{thm:matroid spec}. Then the following facts are true for each positive integer $m$:
\begin{enumerate}

\item  ${\displaystyle  \ffi_* (I_{\Delta})^{(m)}  =    \bigcap_{i=1}^t \ffi_* (P_i)^m  } = \ffi_* (I_{\Delta}^{(m)})$. 

In particular, $\ffi_* (I_{\Delta})^{(m)}$ is generated by monomials in the $f_i$ and has codimension $c$. 

\item If $\F$ is a graded minimal free resolution of $R/I_{\Delta}^{(m)}$ over $S$, then $\F \otimes_S R$ is a graded minimal free resolution of $R/\ffi_* (I_{\Delta})^{(m)}$ over $R$. In particular, $R/ \ffi_* (I_{\Delta})^{(m)}$ is Cohen-Macaulay. 

\end{enumerate} 
\end{thm}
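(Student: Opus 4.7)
The plan is to lift Theorem \ref{thm:matroid spec} from $I_{\Delta}$ to $I_{\Delta}^{(m)}$ by combining three ingredients: the theorem of Minh--Trung and Varbaro that $S/I_{\Delta}^{(m)}$ is Cohen--Macaulay for every $m \geq 1$ whenever $\Delta$ is a matroid; the squarefree identity $I_{\Delta}^{(m)} = \bigcap_{i=1}^t P_i^m$, valid because each $P_i$ is generated by variables and therefore satisfies $P_i^{(m)} = P_i^m$; and the variable-adjunction trick from the proof of Theorem \ref{thm:matroid spec}(3) that upgrades the hypothesis $(P_c)$ into a full length-$s$ regular sequence.

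I would first treat the flat case, in which $f_1, \dots, f_s$ actually form an $R$-regular sequence of length $s$. Then $\ffi : S \to R$ is flat, and Lemma \ref{lucho} (whose same-degree hypothesis can be dropped because $I_{\Delta}^{(m)}$ is monomial) shows that a minimal free resolution $\F$ of $S/I_{\Delta}^{(m)}$ tensors up to a minimal free resolution $\F \otimes_S R$ of $R/\ffi_*(I_{\Delta}^{(m)})$, whence the latter is Cohen--Macaulay. Flatness also commutes with intersections and with ordinary powers, giving
\[
\ffi_*(I_{\Delta}^{(m)}) \;=\; \ffi_*\Bigl(\bigcap_{i=1}^t P_i^m\Bigr) \;=\; \bigcap_{i=1}^t \ffi_*(P_i)^m,
\]
yielding both the monomial-generation statement and the intersection formula in this special case.

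To remove the full-regular-sequence hypothesis I would iterate the construction in the proof of Theorem \ref{thm:matroid spec}(3): adjoin $s-c-1$ new variables $z_1, \dots, z_{s-c-1}$ to form $T = R[z_1,\dots,z_{s-c-1}]$, and replace each $f_i$ by a sufficiently general polynomial $F_i \in (f_i, z_1, \dots, z_{s-c-1})T$ of degree $\deg f_i$, so that $F_1, \dots, F_s$ form a $T$-regular sequence and $F_i \equiv f_i$ modulo $(z_1,\dots,z_{s-c-1})$. Applying the flat case to $\gamma : S \to T$, $y_i \mapsto F_i$, produces a Cohen--Macaulay quotient $T/\gamma_*(I_{\Delta}^{(m)})$ with minimal resolution $\F \otimes_S T$ and intersection identity $\gamma_*(I_{\Delta}^{(m)}) = \bigcap_i \gamma_*(P_i)^m$. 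A Krull-dimension count shows that $z_1, \dots, z_{s-c-1}$ is a regular sequence on this Cohen--Macaulay ring, so reducing modulo these variables preserves both exactness and minimality, giving the asserted resolution of $R/\ffi_*(I_{\Delta}^{(m)})$, its Cohen--Macaulayness, and the specialized identity $\ffi_*(I_{\Delta}^{(m)}) = \bigcap_i \ffi_*(P_i)^m$.

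The remaining step, which I expect to be the main obstacle, is to identify this common ideal with the symbolic power $\ffi_*(I_{\Delta})^{(m)}$. Here the full force of the hypothesis that any $c+1$ of the $f_i$ form a regular sequence is essential: it forces the sum $\ffi_*(P_i) + \ffi_*(P_{i'})$ to have codimension at least $c+1$ whenever the two ideals are distinct, so each minimal prime $Q$ of $\ffi_*(I_{\Delta})$ contains a unique $\ffi_*(P_{i(Q)})$. Because $\ffi_*(P_i)$ is a complete intersection in the regular ring $R$, the associated graded pieces $\ffi_*(P_i)^n/\ffi_*(P_i)^{n+1}$ are free over $R/\ffi_*(P_i)$, from which an inductive depth-lemma argument shows that every ordinary power $\ffi_*(P_i)^m$ is Cohen--Macaulay, hence unmixed with associated primes equal to those of $\ffi_*(P_i)$. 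Thus $\bigcap_i \ffi_*(P_i)^m$ is unmixed with associated primes lying among the minimal primes of $\ffi_*(I_{\Delta})$. Localizing at any such $Q$ collapses every factor with $i \neq i(Q)$ to the unit ideal, so
\[
\Bigl(\bigcap_i \ffi_*(P_i)^m\Bigr) R_Q \;=\; \ffi_*(P_{i(Q)})^m R_Q \;=\; \bigl(\ffi_*(I_{\Delta})R_Q\bigr)^m \;=\; \ffi_*(I_{\Delta})^m R_Q,
\]
the last expression being the $Q$-primary component of $\ffi_*(I_{\Delta})^{(m)}$. Since $\bigcap_i \ffi_*(P_i)^m$ and $\ffi_*(I_{\Delta})^{(m)}$ are both unmixed with the same minimal primes and agree in every such localization, they coincide, completing (1). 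Part (2) is then a direct consequence of the resolution descent executed above.
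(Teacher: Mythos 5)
Your proposal follows essentially the same route as the paper: the flat case is handled by Lemma \ref{lucho} together with the Terai--Trung/Varbaro theorem that $S/I_{\Delta}^{(m)}$ is Cohen--Macaulay, the general case is reduced to it by adjoining auxiliary variables (the paper adds one variable at a time and inducts on $s-(c+1)$, while you add all $s-c-1$ at once --- an immaterial difference), and the identification with the symbolic power $\ffi_*(I_{\Delta})^{(m)}$ is carried out by localizing at the minimal primes of $\ffi_*(I_{\Delta})$ and using that powers of the complete intersections $\ffi_*(P_i)$ are Cohen--Macaulay, exactly as in the paper's opening paragraph. The one place you are too quick is the assertion that reducing modulo $z_1,\dots,z_{s-c-1}$ yields the identity $\ffi_*(I_{\Delta}^{(m)}) = \bigcap_i \ffi_*(P_i)^m$: passing to the quotient by $(z_1,\dots,z_{s-c-1})$ commutes with forming $\gamma_*(I_{\Delta}^{(m)})$ (take images of generators), but it does not automatically commute with the intersection $\bigcap_i \gamma_*(P_i)^m$, so a priori you only obtain the inclusion $\ffi_*(I_{\Delta}^{(m)}) \subseteq \bigcap_i \ffi_*(P_i)^m$. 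The paper closes precisely this gap by noting that $(z, \gamma_*(I_{\Delta}^{(m)}))$ and $\bigcap_i (z, \gamma_*(P_i)^m)$ are both unmixed of codimension $c+1$ and have the same degree, hence coincide; alternatively, one can use $\prod_i P_i^m \subseteq I_{\Delta}^{(m)}$, apply $\ffi_*$ (which respects products of ideals), and localize at each minimal prime to get the reverse inclusion. With that step supplied, your argument is complete and matches the paper's.
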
    

\begin{proof} 
We begin by showing $\ffi_* (I_{\Delta})^{(m)} =    \bigcap_{i=1}^t \ffi_* (P_i)^m$. 
The assumption on the polynomials $f_i$ and Theorem \ref{thm:matroid spec}(2) give that a prime ideal $P$ of $R$ is an associated prime of $R/\ffi_* (I_{\Delta})$ if and only if $P$ is an associated prime ideal of  $R/\ffi_* (P_i)$ for exactly one $i \in [t]$.  Using that $\ffi_* (P_i)$ is a complete intersection, and so $\ffi_* (P_i)^m$ is Cohen-Macaulay, we get $\ffi_* (I_{\Delta})^m R_P = \ffi_* (P_i)^m R_P$. This implies $\ffi_* (I_{\Delta})^{(m)} =    \bigcap_{i=1}^t \ffi_* (P_i)^m$, as desired. 

Assume now that $s \le c+1$. It was shown independently in \cite{V} and \cite{TT} that, for each positive integer $m$, the ideal 
\[
I_{\Delta}^{(m)} = \bigcap_{j=1}^t P_j^m 
\]
is Cohen-Macaulay. Hence Lemma \ref{lucho} gives that $\ffi_* (I_{\Delta}^{(m)})$ is Cohen-Macaulay  and that its resolution can be obtained from the resolution of $S/I_{\Delta}^{(m)}$ over $S$. 
Recall that in the case $s \le c+1$, the homomorphism $\ffi$ is flat. Thus, using the identity  established above and \cite[Theorem 7.4(ii)]{M},  we get
\[
\ffi_* (I_{\Delta}^{(m)}) = \bigcap_{j=1}^t \ffi_* (P_j^m) = \bigcap_{j=1}^t (\ffi_* (P_j))^m =  \ffi_* (I_{\Delta})^{(m)}.
 \]
 
Let $s > c+1$. We use induction on the difference between $s$ and the number $c+1$ as in the proof of Theorem \ref{thm:matroid spec} to show the remaining claims. Adopt the notation employed in the proof of Theorem \ref{thm:matroid spec}. The induction hypothesis gives that 
\[
\gamma_* (I_{\Delta}^{(m)} )  =    \bigcap_{i=1}^t \gamma_* (P_i)^m  
\]
is Cohen-Macaulay. By the choice of the $f'_i$, the variable $z$ is not a zerodivisor of any $T/\gamma_* (P_i)$. Hence, all the ideals $(z, \gamma_* (I_{\Delta}^{(m)} ))$ and $(z, \gamma_* (P_i)^m)$ are Cohen-Macaulay, and 
\[
(z, \gamma_* (I_{\Delta}^{(m)} )) \subset  \bigcap_{i=1}^t  (z, \gamma_* (P_i)^m). 
\]
Since both ideals are unmixed of codimension $c+1$ and have the same degree, they must be equal. It follows that 
\[
\ffi_* (I_{\Delta}^{(m)}) =    \bigcap_{i=1}^t \ffi_* (P_i)^m,
\]
as desired. 

Finally, using the isomorphism $T/(\gamma_* (I_{\Delta}^{(m)}), z) \cong R/\ffi_* (I_{\Delta}^{(m)})$ and the fact that $z$ is not a zerodivisor  of $T/\gamma_* (I_{\Delta}^{(m)})$ gives Claim (3). 
\end{proof}

The above result applies to $\lambda$-configurations. 

\begin{cor} 
    \label{symb power stars}
Let $\lambda = [d_1, \dots, d_s]$ and $\mathcal H = \{ F_1,\dots,F_s\}$, where $F_i$ is a hypersurface of degree $d_i$ in $\PP ^n$ (not necessarily reduced), defined by the form $f_i$.  Let $c$ be an integer such that $1 \leq c \leq \min(n,s)$.  Assume that the intersection of any $c+1$ of these hypersurfaces has codimension $c+1$.  
Let  $V_{\lambda,c}$ be the corresponding $\lambda$-configuration in codimension $c$.  Then every symbolic power of $I_{V_{\lambda,c}}$ is Cohen-Macaulay.  Furthermore, the minimal generators of each $I_{V_{\lambda,c}}^{(m)}$ are  monomials in the $f_i$.
\end{cor}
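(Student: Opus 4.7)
The plan is to obtain this corollary as a direct application of Theorem \ref{thm:symb powers} to the uniform matroid case.

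First, I would invoke Example \ref{exa:d-stars}, which records that the ideal of any hypersurface configuration $V_{\lambda,c}$ coincides with $\ffi_*(I_{s,c})$, where $I_{s,c}$ is the Stanley-Reisner ideal of the uniform matroid on $[s]$ whose facets are the $(s-c)$-subsets of $[s]$, and $\ffi : S = k[y_1,\ldots,y_s] \to R$ is the ring homomorphism $y_i \mapsto f_i$. Thus the corollary reduces to verifying that the hypotheses of Theorem \ref{thm:symb powers} are met for this particular $\Delta$ and this choice of $f_i$.

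Second, I would check that the assumption ``any $c+1$ of the hypersurfaces intersect in codimension $c+1$'' translates into the regularity condition required by Theorems \ref{thm:matroid spec} and \ref{thm:symb powers}, namely that any subset of at most $c+1$ of the $f_i$ forms an $R$-regular sequence. The polynomial ring $R$ is Cohen-Macaulay, so a collection of homogeneous elements forms a regular sequence if and only if the ideal they generate has codimension equal to the number of elements. Given any subset $\{f_{i_1},\ldots,f_{i_j}\}$ with $j \le c+1$, if their common vanishing locus had codimension strictly less than $j$, one could enlarge the subset to size $c+1$ by adjoining $c+1-j$ further $f_i$'s; the resulting intersection would have codimension at most $(j-1) + (c+1-j) = c$, contradicting the hypothesis. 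Hence every such subset intersects in the expected codimension, and so forms an $R$-regular sequence.

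Third, Theorem \ref{thm:symb powers}(1) now yields the equality $I_{V_{\lambda,c}}^{(m)} = \ffi_*(I_{s,c}^{(m)})$. Since $I_{s,c}^{(m)}$ is a monomial ideal in $S$, its image under $\ffi$ is generated by the corresponding monomials in the $f_i$, proving the second assertion of the corollary. Theorem \ref{thm:symb powers}(2) gives the Cohen-Macaulayness of $R/I_{V_{\lambda,c}}^{(m)}$ for every $m \ge 1$, proving the first assertion. The only non-formal step is the codimension argument in the previous paragraph; everything else is a direct quotation of Example \ref{exa:d-stars} and Theorem \ref{thm:symb powers}, so I do not anticipate any serious obstacle.
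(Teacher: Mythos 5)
Your proof is correct and takes essentially the same route as the paper: the paper's own argument likewise identifies $I_{V_{\lambda,c}}$ with $\ffi_*(I_{s,c})$ via Example \ref{exa:d-stars} and then quotes Theorem \ref{thm:symb powers} to get both the Cohen--Macaulayness and the description of the generators. The only difference is that you explicitly check that the hypothesis on codimensions of intersections translates into the regular-sequence hypothesis of Theorems \ref{thm:matroid spec} and \ref{thm:symb powers}, a step the paper leaves implicit; your enlargement argument for that step is sound (for $s \geq c+1$, which is the only case where the hypothesis has content).
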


\begin{proof}
As pointed out in Example \ref{exa:d-stars}, the ideal $I_{s,c}$ is the Stanley-Reisner ideal of a uniform matroid. Hence Theorem \ref{thm:symb powers}, gives that,  for each positive integer $m$, 
\[
I_{V_{\lambda,c}}^{(m)} = \ffi_* (I_{s, c})^{(m)} = \ffi_* (I_{s,c}^{(m)}) =  \bigcap_{1 \le i_1 < i_2 < \cdots < i_c \le s} (f_{i_1}, f_{i_2},\ldots,f_{i_c})^m 
\]
is Cohen-Macaulay of codimension $c$. 
\end{proof}

In the special case, where all the forms $f_i$ are linear, the ideal $\ffi_* (I_{s,c})^{(m)}$ is a symbolic power of the ideal of a star configuration. For this case, Corollary \ref{symb power stars} has been shown previously in \cite{GHM3}. 

For an application in the next section we note the following result.

\begin{prop}
   \label{prop:inclusion-preserving} 
Let $\Delta$ be a matroid on $[s]$  of dimension $s-1-c$.   Assume $n \geq c$ and that  $f_1,\ldots,f_s \in R = k[x_0,\ldots,x_n]$ are homogeneous polynomials such that any subset of at most $c+1$ of them forms an $R$-regular sequence.   Consider the ring homomorphism $\ffi: S = k[y_1,\ldots,y_s] \to R$, defined by $y_i \mapsto f_i$. Whenever $m$ and $r$ are positive integers, we have the following facts:

\begin{enumerate}

\item $I_{\Delta}^{(m)} \subseteq I_{\Delta}^r$ implies  $\ffi_*( I_{\Delta})^{(m)} \subseteq \ffi_* (I_{\Delta})^r$.

\item If $f_1,\ldots,f_s$ is an $R$-regular sequence, then 
\[
I_{\Delta}^{(m)} \subseteq I_{\Delta}^r \text{ if and only if } \ffi_*( I_{\Delta})^{(m)} \subseteq \ffi_* (I_{\Delta})^r.
\]
\end{enumerate}
\end{prop}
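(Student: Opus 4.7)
Part (1) is essentially a formal consequence of Theorem~\ref{thm:symb powers}(1). That theorem provides the key identity $\ffi_* (I_{\Delta})^{(m)} = \ffi_* (I_{\Delta}^{(m)})$. On the other hand, for any ring homomorphism one always has $\ffi_* (I^r) = \ffi_* (I)^r$, and $\ffi_*$ preserves containments (generators map to elements of the larger extended ideal). So the chain
\[
\ffi_* (I_{\Delta})^{(m)} = \ffi_* (I_{\Delta}^{(m)}) \subseteq \ffi_* (I_{\Delta}^r) = \ffi_* (I_{\Delta})^r
\]
gives exactly what we want once we assume $I_{\Delta}^{(m)} \subseteq I_{\Delta}^r$.

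For part (2) the reverse implication requires us to contract a containment from $R$ back to $S$, and this is where the stronger hypothesis is used. The plan is to show that under the assumption that $f_1, \ldots, f_s$ is itself an $R$-regular sequence, the homomorphism $\ffi$ is faithfully flat (in the graded category), so that $\ffi^{-1}(\ffi_*(J)) = J$ for every homogeneous ideal $J \subseteq S$. Flatness is the standard criterion: a map out of the polynomial ring $k[y_1,\ldots,y_s]$ is flat exactly when the variables map to an $R$-regular sequence, as was already invoked in the proof of Theorem~\ref{thm:symb powers} in the case $s \le c+1$. To promote flatness to faithful flatness one observes that the $f_i$ have positive degree, so the extension of the graded maximal ideal $(y_1,\ldots,y_s)S$ lies inside the irrelevant ideal of $R$ and is therefore proper; in the graded setting, this suffices.

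With faithful flatness in hand, the implication is immediate: if $\ffi_* (I_{\Delta})^{(m)} \subseteq \ffi_* (I_{\Delta})^r$, the identities from part (1) rewrite this as $\ffi_* (I_{\Delta}^{(m)}) \subseteq \ffi_* (I_{\Delta}^r)$; contracting along $\ffi$ recovers $I_{\Delta}^{(m)} \subseteq I_{\Delta}^r$. Combined with part (1), this yields the claimed equivalence.

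The only real subtlety, and hence the main obstacle, is the faithful flatness step in part (2); everything else is book-keeping around Theorem~\ref{thm:symb powers}(1). The flatness criterion itself is classical, and its graded upgrade to faithful flatness is routine, but both should be stated carefully since the whole content of part (2) hinges on them.
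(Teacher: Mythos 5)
Your proposal is correct and follows essentially the same route as the paper: part (1) is the chain $\ffi_*(I_{\Delta})^{(m)} = \ffi_*(I_{\Delta}^{(m)}) \subseteq \ffi_*(I_{\Delta}^{r}) = \ffi_*(I_{\Delta})^{r}$ via Theorem~\ref{thm:symb powers}, and part (2) rests on the faithful flatness of $\ffi$ when $f_1,\ldots,f_s$ is a regular sequence (the paper phrases this as preservation of non-containment, you phrase it as $\ffi^{-1}\circ\ffi_*=\mathrm{id}$ on homogeneous ideals, which is the same fact). If anything, you supply slightly more justification for the faithful-flatness step than the paper does.
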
 

\begin{proof} 
Assume first $I_{\Delta}^{(m)} \subseteq I_{\Delta}^r$. 
Using Theorem \ref{thm:symb powers}, we get
\[
\ffi_*( I_{\Delta})^{(m)} = \ffi_*( I_{\Delta}^{(m)}) \subseteq \ffi_* ( I_{\Delta}^r)  =  \ffi_* (I_{\Delta})^r . 
\]

To show the second claim, it remains to show the reverse implication. Our assumption on the $f_i$ gives that $\ffi$ is a faithfully flat homomorphism. Hence $I_{\Delta}^{(m)} \nsubseteq I_{\Delta}^r$ implies $\ffi_*(I_{\Delta}^{(m)}) \nsubseteq \ffi_*(I_{\Delta}^r)$, and we are done. 
\end{proof}


\section{Applications}

Our first application will be to construct an ideal coming in a natural way from a multipartite hypergraph, and recognize it as also coming from our construction. Thus it and its symbolic powers will be Cohen-Macaulay.  Its minimal free resolution will also be known.

Let $G$ be a $c$-uniform complete multipartite hypergraph. More precisely, following \cite [Definition 3.4]{NRe}, we will assume that it is a complete {\em $s$-partite} hypergraph, $s \ge c$,  on a partitioned vertex set $X^{(1)} \sqcup \dots \sqcup X^{(s)}$, consisting of all $c$ element subsets with each element  coming from a different $X^{(i)}$.  Let $|X^i| =e_i$.  By \cite[Theorem 3.13]{NRe}, the ideal $I_G$ has a linear resolution.  Thus, the Alexander dual, $I_G^{\vee}$,  of the ideal $I_G$ of $G$ is Cohen-Macaulay.

By definition,
\[
I_G^{\vee} = \bigcap_{1 \leq i_1 < i_2 < \dots < i_c \leq s}  \ \bigcap_{
k = 1,\ldots,c \atop  1 \le j_k \le e_k } (x_{i_1,j_1} , x_{i_2, j_2} , \dots , x_{i_c, j_c}), 
\]  
where each variable $x_{i,j}$ corresponds to the vertex $v_{i, j}$ in $X^{(i)}$. 

We will now specialize this ideal by assigning to each variable $x_{i, j}$  a homogenous polynomial $A_{i,j}$. Thus,  to each face of $G$
\[
\{ v_{i_1,j_1},\dots,v_{i_{c},j_{c}} \}
\]
we can associate an ideal of the form $ (A_{i_1,j_1}, \dots, A_{i_{c},j_{c}})$.  We will focus on the intersection of all such ideals, assuming that the $A_{i,j}$ meet properly.

More formally, let $R = k[x_0,\dots,x_n]$.  Consider sets of homogeneous polynomials in $R$ 
\[
\begin{array}{rcl}
\mathcal A_1 & = & \{ A_{1,1}, A_{1,2},  \dots, A_{1,e_1} \} \\

\mathcal A_2 & = & \{ A_{2,1}, A_{2,2}, \dots, A_{2,e_2} \} \\

& \vdots \\

\mathcal A_s& = & \{ A_{s,1}, A_{s,2}, \dots,  A_{s,e_s} \} \\
\end{array}
\]
where we assume that any choice of $n+1$ of them is a regular sequence, where we choose at most one $A_{i,j}$ from each subset.  Now choose any codimension $1 \leq c \leq n$.  We define a scheme $W_c$ by constructing the following saturated ideal:
\[
I_{W_c} = \bigcap_{1 \leq i_1 < i_2 < \dots < i_c \leq s}  \ \bigcap_{
k = 1,\ldots,c \atop  1 \le j_k \le e_k } ( A_{i_1,j_1} , A_{i_2, j_2} , \dots , A_{i_c, j_c}). 
\]  
That is, thinking of the $A_{i,j}$ as hypersurfaces, we form all possible codimension $c$ complete intersections such that no two generators within a complete intersection come from the same $\mathcal A_i$.  Since any choice of $n+1$ of the $A_{i,j}$ form a regular sequence, no two of these codimension $c$ complete intersections have any common components.  Hence the above construction gives the saturated ideal of an unmixed codimension $c$ subscheme of $\PP^n$, which we  call $W_c$.  

Notice  that if $e_1 = \dots = e_s = 1$ then we have a $\lambda$-configuration of codimension $c$ (where $\lambda = [\deg A_{1,1}, \deg A_{2,1}, \dots, \deg A_{s,1}]$).  If furthermore all the $A_{i,j}$ have degree 1 then we have a linear star configuration of codimension $c$.  

\begin{cor}
The saturated ideal $I_{W_c}$ can be realized as the ideal of a suitable $\lambda$-configuration.  Hence its Hilbert function can be computed,  all its symbolic powers are Cohen-Macaulay, and its minimal free resolution can be described as above.
\end{cor}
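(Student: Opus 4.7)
The plan is to exhibit $I_{W_c}$ explicitly as a $\lambda$-configuration ideal. Set $F_i := \prod_{j=1}^{e_i} A_{i,j}$ for $i = 1, \ldots, s$, put $\lambda = [\deg F_1, \ldots, \deg F_s]$, and consider the $\lambda$-configuration $V_{\lambda, c}$ defined by the hypersurfaces $V(F_1), \ldots, V(F_s) \subset \PP^n$.

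First I would verify that the $F_i$ satisfy the hypothesis of Proposition \ref{hf and ideal}: any $c+1$ of the hypersurfaces $V(F_i)$ meet in codimension $c+1$. Since $V(F_i) = \bigcup_{j} V(A_{i,j})$, one has
\[
V(F_{i_1}) \cap \cdots \cap V(F_{i_{c+1}}) \;=\; \bigcup_{(j_1, \ldots, j_{c+1})} V(A_{i_1,j_1}, \ldots, A_{i_{c+1}, j_{c+1}}),
\]
and each piece on the right is a codimension $c+1$ complete intersection by the standing regular-sequence hypothesis on the $A_{i,j}$ (which allows at most one factor per family). Hence the union also has codimension $c+1$, so $V_{\lambda,c}$ is a bona fide $\lambda$-configuration.

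Next I would match the two ideals. By Proposition \ref{hf and ideal},
\[
I_{V_{\lambda, c}} \;=\; \bigcap_{1 \le i_1 < \cdots < i_c \le s} (F_{i_1}, \ldots, F_{i_c}),
\]
so the identification $I_{V_{\lambda,c}} = I_{W_c}$ reduces to the pointwise identity
\[
(F_{i_1}, \ldots, F_{i_c}) \;=\; \bigcap_{(j_1, \ldots, j_c)} (A_{i_1, j_1}, \ldots, A_{i_c, j_c})
\]
for each tuple $i_1 < \cdots < i_c$. I would establish this by induction on $c$, peeling off one factor at a time: working modulo the regular sequence $(A_{i_2, j_2}, \ldots, A_{i_c, j_c})$, the elements $A_{i_1, 1}, \ldots, A_{i_1, e_{i_1}}$ are pairwise coprime nonzerodivisors, so the intersection of the principal ideals $(A_{i_1, j})$ collapses to the product $(F_{i_1})$ in the quotient; lifting back and repeating over the remaining $c-1$ slots gives the full identity. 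Intersecting over all $c$-subsets $\{i_1 < \cdots < i_c\} \subseteq [s]$ then yields $I_{V_{\lambda,c}} = I_{W_c}$.

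Once this realization is in place, the three conclusions of the corollary follow immediately from preceding results: the Hilbert function, degree, and minimal generators come from Proposition \ref{hf and ideal}; the Cohen-Macaulayness of every symbolic power is Corollary \ref{symb power stars}; and the explicit cellular minimal free resolution for specializations $\ffi_*(I_{s,c})$ supplied by the corollary following Example \ref{exa:d-stars} transports across. The main obstacle lies in the intersection identity of the middle step: that is where the regular-sequence hypothesis must be propagated carefully through the iterated quotients, in order to justify the pairwise coprimality of the factors of each $F_i$ modulo the complementary complete intersection $(A_{i_2,j_2},\ldots,A_{i_c,j_c})$.
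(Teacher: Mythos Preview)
Your approach matches the paper's exactly: the paper's proof is the single sentence ``For $1 \leq i \leq s$ let $f_i = \prod_{j=1}^{e_i} A_{i,j}$. Then clearly $W_c$ is the $\lambda$-configuration of codimension $c$ associated to $\{ f_1, \dots, f_s \}$,'' after which the earlier results on $\lambda$-configurations are invoked. Your write-up simply fills in the details behind ``clearly''---verifying the codimension hypothesis on the $F_i$ and arguing the intersection identity $(F_{i_1},\dots,F_{i_c})=\bigcap_{(j_1,\dots,j_c)}(A_{i_1,j_1},\dots,A_{i_c,j_c})$---and then cites the same consequences (Proposition~\ref{hf and ideal}, Corollary~\ref{symb power stars}, and the cellular resolution after Example~\ref{exa:d-stars}).

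The point you flag as the main obstacle is well taken: the stated hypothesis constrains only regular sequences chosen with \emph{at most one} $A_{i,j}$ per family, so pairwise coprimality of $A_{i_1,1},\dots,A_{i_1,e_{i_1}}$ modulo a complementary complete intersection is not literally a consequence of it. The paper's own text relies on the same unstated point (its assertion that ``no two of these codimension $c$ complete intersections have any common components'' needs exactly this when the two complete intersections come from the same $c$-subset of families), so this is an implicit assumption shared by both arguments rather than a defect in your route relative to the paper's.
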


\begin{proof}
For $1 \leq i \leq s$ let $f_i = \prod_{j = 1}^{e_i} A_{i,j}$.  Then clearly $W_c$ is the $\lambda$-configuration of codimension $c$ associated to $\{ f_1, f_2, \dots, f_s \}$.  Thus the above ideal can be obtained by specialization, so the assertions follow from our earlier results.
\end{proof}

For a second application of our methods, note that the $m$-th symbolic power of the ideal of a $\lambda$-configuration is the intersection of the $m$-th powers of the complete intersections that go into its construction (see for instance Theorem \ref{thm:symb powers} (1)), regardless of whether these complete intersections are reduced or irreducible. (For instance, the $m$-th symbolic power of the ideal $I_{W_c}$ constructed above is the intersection of the ideals $( A_{i_1,j_1}, \dots, A_{i_c, j_c} )^m$.)   We have seen that all such symbolic powers are Cohen-Macaulay.  

By a slight abuse of notation, we will refer to these complete intersections as the components of the $\lambda$-configuration.  One can ask about properties of  the ideal formed by allowing the powers of the ideals of components to be different.  Not much is known about this problem except in the case of fat points in $\PP^2$ \cite[Example 4.2.2]{CHT} and of tetrahedral curves in $\PP^3$.  The latter are subschemes in $\PP^3$ defined by ideals of the form
\begin{equation}\label{tetra}
( x_0,x_1 )^{p_1} \cap ( x_0,x_2 )^{p_2} \cap ( x_0,x_3 )^{p_3} \cap ( x_1,x_2 )^{p_4} \cap ( x_1,x_3 )^{p_5} \cap ( x_2,x_3 )^{p_6} .
\end{equation}
In this case, combining the work in \cite{schwartau}, \cite{MN7}, \cite{FMN} and \cite{francisco}, much is known about the ideal, the minimal free resolution, the deficiency module and the even liaison class of such curves.  A broad array of heavy machinery, largely based on the fact that these are monomial ideals, went into the results in these papers.  

In \cite [Remark 7.3]{MN7}, it was observed that if we replace the indeterminates $x_0,x_1,x_2,x_3$ by a regular sequence $f_1,f_2, f_3, f_4$, then most of the results in \cite{MN7} continue to hold in $\PP^3$.  The argument was that the liaison approach used therein can be extended to this setting.  In \cite [Question 7.4 (7)]{MN7} it was asked whether the same sort of program can be carried out in higher-dimensional projective space, and it was noted that now issues of local Cohen-Macaulayness will arise, even in the codimension two case.  

Our observation now is that all of these results can be extended almost immediately to higher-dimensional  projective space using Lemma \ref{lucho}.  For instance, we have the following generalization of the main theorem of \cite{francisco}, which built on the work in  \cite{schwartau}, \cite{MN7}, \cite{FMN}.

\begin{cor}
Let $f_1,f_2,f_3,f_4$ be a regular sequence of homogeneous polynomials in $k[x_0,\dots,x_n]$.  Let $C$ be the codimension two scheme defined by the saturated ideal
\[
( f_1,f_2 )^{p_1} \cap ( f_1, f_3 )^{p_2} \cap ( f_1,f_4 )^{p_3} \cap ( f_2,f_3 )^{p_4} \cap ( f_2,f_4 )^{p_5} \cap ( f_3,f_4 )^{p_6} .
\]
Assume without loss of generality that $p_1+p_6 = \max \{ p_1+p_6, p_2+p_5, p_3+p_4 \}$.  Then $C$ is ACM if and only if at least one of the following conditions holds:

\begin{itemize}
\item[(i)] $p_1=0$ or $p_6 = 0$.
\item[(ii)] $p_1+p_6 = \varepsilon + \max \{ p_2+p_5,p_3+p_4\}$, where $\varepsilon \in \{0,1\}$.
\item[(iii)] $2p_1 < p_2+p_3+3-p_6$ or $2p_1 < p_4+p_5+3-p_6$ or $2p_6 < p_2+p_4+3-p_1$ or $2p_6 < p_3+p_5+3-p_1$.
\item[(iv)] All inequalities of (iii) fail, $p_1+p_6 = 2+p_2+p_5=2+p_3+p_4$, and $p_1+p_3+p_5$ is even.
\end{itemize}
\end{cor}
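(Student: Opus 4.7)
The strategy is to reduce the statement to the original theorem on tetrahedral curves in $\PP^3$ by applying the specialization machinery of Lemma \ref{lucho}. Let $S = k[y_1,y_2,y_3,y_4]$ and consider the saturated monomial ideal
\[
J = (y_1,y_2)^{p_1} \cap (y_1,y_3)^{p_2} \cap (y_1,y_4)^{p_3} \cap (y_2,y_3)^{p_4} \cap (y_2,y_4)^{p_5} \cap (y_3,y_4)^{p_6} \subset S,
\]
which defines a tetrahedral curve in $\PP^3$ with exponent vector $(p_1,\dots,p_6)$. By the main theorem of \cite{francisco} (building on \cite{schwartau}, \cite{MN7}, and \cite{FMN}), $S/J$ is Cohen-Macaulay if and only if at least one of the conditions (i)--(iv) holds.

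Now consider the ring homomorphism $\ffi \colon S \to R = k[x_0,\ldots,x_n]$ sending $y_i \mapsto f_i$. Since $f_1,\ldots,f_4$ form an $R$-regular sequence, $\ffi$ is flat. Flatness implies that $\ffi$ commutes with finite intersections of ideals (apply the exact functor $R \otimes_S -$ to the standard short exact sequence $0 \to I_1 \cap I_2 \to I_1 \oplus I_2 \to I_1+I_2 \to 0$), and specialization always commutes with taking powers. Thus, indexing the six pairs $(i,j)$ in the same manner in $S$ and $R$,
\[
\ffi_*(J) = \bigcap_{1 \le i<j \le 4} \ffi_*\!\left((y_i,y_j)^{p_{ij}}\right) = \bigcap_{1 \le i<j \le 4} (f_i,f_j)^{p_{ij}} = I_C.
\]

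Finally we invoke Lemma \ref{lucho}. Although the $f_i$ need not all have the same degree, the ideal $J$ is monomial, so the lemma applies in its more general form and gives that $S/J$ is Cohen-Macaulay if and only if $R/\ffi_*(J) = R/I_C$ is Cohen-Macaulay, i.e., if and only if $C$ is ACM. Combining this with the criterion from \cite{francisco} recalled above yields the desired equivalence.

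The main obstacle is essentially bookkeeping: one must verify that the specialization $\ffi$ interacts well with both the intersections and the powers that define $I_C$, so that $\ffi_*(J)$ coincides with $I_C$ on the nose. This is guaranteed by the flatness of $\ffi$, which in turn follows from the $f_i$ being a regular sequence. Once this identification is in place, Lemma \ref{lucho} transports the combinatorial Cohen-Macaulayness criterion of \cite{francisco} for the ``linear'' tetrahedral curve in $\PP^3$ verbatim to the present, arbitrary-codimension setting.
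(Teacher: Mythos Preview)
Your proof is correct and follows essentially the same route as the paper: reduce to Francisco's theorem for the monomial tetrahedral ideal $J$ in $k[y_1,\dots,y_4]$, use flatness of the substitution map (coming from the regular sequence hypothesis) to identify $\ffi_*(J)$ with $I_C$, and then invoke Lemma~\ref{lucho} (monomial case) to transfer Cohen--Macaulayness. If anything, you are a bit more explicit than the paper in justifying why $\ffi_*$ commutes with the intersections defining $J$.
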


 \begin{proof}  
If $f_1 = x_0, \ f_2 = x_1, \ f_3 = x_2, \ f_4 = x_3$, and $n=3$, then this is the result of \cite{francisco} taken verbatim. Call the corresponding ideal $J$.

Now replace each $x_i$ by $f_i$ in the monomials generating $J$ and denote by $I$ the ideal in $R = k[x_0,\dots,x_n]$ generated by these monomials in the $f_i$. Using again that the substitution homomorphism is flat by the assumption on the $f_i$, we see that $J$ is equal to the ideal considered in the statement. Moreover, Lemma \ref{lucho} gives that the length of its resolution over $R$ is the same as the length of the resolution of $I$ over $k[x_0,\ldots,x_3]$,  which concludes the argument.
\end{proof}

As a third application of our results we consider how they can 
be used to calculate Waldschmidt constants and resurgences.  Let $(0)\neq I\subsetneq R=k[x_0,\ldots,x_n]$ be a homogeneous ideal.
We denote by $\alpha(I)$ the least degree among nonzero forms in $I$.
The {\em Waldschmidt constant} $\widehat{\alpha}(I)$ of $I$ is
$$\widehat{\alpha}(I) = \lim_{m\to\infty}\frac{\alpha(I^{(m)})}{m}.$$
This limit is known to exist and in various situations
it is of interest to compute it or at least to estimate it
(\cite{BH, GHvT, DHST}). For example,
the {\em resurgence}, defined as
$$\rho(I)=\sup\Big\{\frac{m}{r}: I^{(m)}\not\subseteq I^r\Big\},$$
 satisfies  (by  \cite[Theorem 1.2.1]{BH})
\begin{equation}
   \label{eq:estimate rho}
\frac{\alpha(I)}{\widehat{\alpha}(I)}\leq \rho(I).
\end{equation}

First we consider the change of Waldschmidt constants under specializations of matroid ideals. 

\begin{cor}
    \label{cor:Waldschmidt} 
Adopt the notation and assumptions of Theorem \ref{thm:matroid spec} and additionally assume that all forms $f_1,\ldots,f_s$ have the same degree, say $d$. Then we have the relation 
\[
\widehat{\alpha} (\ffi_*(I_{\Delta})) = d  \cdot \widehat{\alpha} (I_{\Delta}). 
\] 
\end{cor}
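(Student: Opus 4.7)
The plan is to prove the stronger pointwise identity
\[
\alpha(\ffi_*(I_\Delta)^{(m)}) = d \cdot \alpha(I_\Delta^{(m)})
\]
for every positive integer $m$, and then to divide by $m$ and pass to the limit. The key observation underlying the whole argument is that the common-degree hypothesis $\deg f_i = d$ forces $\ffi: S \to R$ to be a graded homomorphism that rescales degrees by $d$, that is, $\ffi(S_e) \subseteq R_{ed}$.

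First I would invoke Theorem \ref{thm:symb powers}: part (1) identifies $\ffi_*(I_\Delta)^{(m)}$ with $\ffi_*(I_\Delta^{(m)})$, and part (2) produces a graded minimal free resolution of $R/\ffi_*(I_\Delta)^{(m)}$ over $R$ as $\F \otimes_S R$, where $\F$ is a graded minimal free resolution of $S/I_\Delta^{(m)}$ over $S$. The next step is to track the graded shifts: since $\ffi$ scales degrees by $d$, each summand $S(-e)$ appearing in $\F$ becomes the summand $R(-ed)$ in $\F \otimes_S R$. Reading off the first module of the resolution, which records the degrees of the minimal generators, I conclude that the multiset of degrees of a minimal generating set of $\ffi_*(I_\Delta)^{(m)}$ is exactly $d$ times the multiset of degrees of a minimal generating set of $I_\Delta^{(m)}$. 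Taking minima yields the displayed pointwise identity.

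Dividing by $m$ and letting $m \to \infty$ then gives
\[
\widehat{\alpha}(\ffi_*(I_\Delta)) = \lim_{m \to \infty} \frac{\alpha(\ffi_*(I_\Delta)^{(m)})}{m} = d \cdot \lim_{m \to \infty} \frac{\alpha(I_\Delta^{(m)})}{m} = d \cdot \widehat{\alpha}(I_\Delta),
\]
as required. There is no serious obstacle here: Theorem \ref{thm:symb powers} has already done the heavy lifting by providing the base-change resolution, and all that remains is to keep careful track of the graded degrees through the extension of scalars --- something that becomes transparent precisely when all the $f_i$ share a single degree.
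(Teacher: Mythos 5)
Your argument is correct and rests on the same two ingredients as the paper's: the identification $\ffi_*(I_\Delta)^{(m)}=\ffi_*(I_\Delta^{(m)})$ from Theorem \ref{thm:symb powers} and the fact that $\ffi$ rescales degrees by $d$, giving $\alpha(\ffi_*(I_\Delta)^{(m)})=d\cdot\alpha(I_\Delta^{(m)})$ before passing to the limit. The only difference is cosmetic: the paper reads off this degree identity directly from the monomial generators of $I_\Delta^{(m)}$ (part (1) of the theorem suffices), whereas you route it through the base-changed minimal free resolution of part (2), which is correct but more machinery than the statement requires.
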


\begin{proof}
It is enough to observe that, for each monomial $\pi \in k[y_1,\ldots,y_s]$, $\deg \ffi (\pi) = d \cdot \deg (\pi)$.  
\end{proof} 

Again, we illustrate this result using hypersurface configurations. 

\begin{example}
    \label{exa:Waldschmidt uniform spec} The Stanley-Reisner ideal $I_{s,c}$ of a uniform matroid of dimension $s-c-1$ on $s$ vertices has Waldschmidt constant $\widehat{\alpha}(I_{s,c})=\frac{s}{c}$ by 
\cite{MFO} or \cite[Lemma 2.4.1, Lemma 2.4.2 and the proof of Theorem 2.4.3]{BH}. Specializing it by forms $f_1,\ldots,f_s$ of degree $d$, we get the ideal of a hypersurface configuration $V_{\lambda,c}$, where $\lambda = [d,\ldots,d]$. It follows that $\widehat{\alpha}(I_{V_{\lambda,c}})=\frac{d s}{c}$. 
\end{example} 

If we specialize by using forms of varying degree, things are more complicated. 
To compute $\alpha(\ffi_*(I_{\Delta})^{(m)})$ (and hence $\widehat{\alpha} (\ffi_*(I_{\Delta}))$), we must take all monomials in the $y_i$ which vanish
on all components of the variety defined by $I_{\Delta}$ to order at least $m$, and then find the minimum degree 
among these monomials after substituting $f_i$
in for each $y_i$. This is of course doable but will depend on the specific degrees of the $f_i$.

\begin{example} 
     \label{exa:non-uniform spec}
Consider specializations of  four coordinate points in $\PP^3$, that is, of the ideal $I_{4,3}$. The $m$-th symbolic power of its specialization is 
\[
\ffi_* (I_{4,3})^{(m)} = (f_1,f_2,f_3)^m \cap (f_1,f_2,f_4)^m \cap (f_1,f_3,f_4)^m \cap (f_2,f_3,f_4)^m. 
\]
Assume $m = 6 k$. As shown in Example \ref{exa:Waldschmidt uniform spec}, if all $f_i$ have degree $d$, then  $\widehat{\alpha} (\ffi_* (I_{4,3})) = \frac{4 d}{3}$. 
But suppose that $f_1,f_2,f_3$ are linear forms and $f_4$ has degree $d \ge 2$. Then $(f_1 f_2 f_3)^{3k}$ is in $\ffi_* (I_{4,3})^{(6 k)}$. In fact, $\ffi_* (I_{4,3})^{(3 k)}$ has initial degree $9 k$ in this case. Thus, the  Waldschmidt constant of $\ffi_* (I_{4,3})$  is 
\[
\widehat{\alpha} (\ffi_* (I_{4,3})) = \frac{9k}{6k} = \frac{3}{2}, 
\]
which is in fact $\widehat{\alpha}(I_{3,2})$ for the ideal $I_{3,2}\subseteq k[y_1,y_2,y_3]$. 
In particular, it is independent of the degree of $f_4$ whenever this degree is at least 2. 
\end{example}

We now turn attention to the resurgence. Proposition \ref{prop:inclusion-preserving} gives: 

\begin{cor}
    \label{cor:resurgence}
Adopting the notation and assumptions of Theorem \ref{thm:matroid spec}, we have: 
\begin{enumerate}

\item $\rho (\ffi_*( I_{\Delta})) \le \rho (I_{\Delta})$. 

\item If $f_1,\ldots,f_s$ is an $R$-regular sequence, then 
$\rho (\ffi_*( I_{\Delta})) = \rho (I_{\Delta})$.

\end{enumerate}
\end{cor}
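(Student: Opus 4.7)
The plan is to read off both inequalities directly from Proposition \ref{prop:inclusion-preserving} by rewriting them as statements about the sets of pairs $(m,r)$ that witness containment failure. Recall that by definition
\[
\rho(I) = \sup \left\{ \frac{m}{r} \;\Big|\; I^{(m)} \not\subseteq I^r \right\},
\]
so controlling $\rho$ is the same as controlling which pairs $(m,r)$ of positive integers satisfy $I^{(m)} \not\subseteq I^r$.

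For part (1), I would take the contrapositive of Proposition \ref{prop:inclusion-preserving}(1): for any positive integers $m, r$, if $\ffi_*(I_{\Delta})^{(m)} \not\subseteq \ffi_*(I_{\Delta})^r$, then $I_{\Delta}^{(m)} \not\subseteq I_{\Delta}^r$. Hence
\[
\left\{ \tfrac{m}{r} \;\big|\; \ffi_*(I_{\Delta})^{(m)} \not\subseteq \ffi_*(I_{\Delta})^r \right\} \subseteq \left\{ \tfrac{m}{r} \;\big|\; I_{\Delta}^{(m)} \not\subseteq I_{\Delta}^r \right\},
\]
and taking suprema yields $\rho(\ffi_*(I_{\Delta})) \le \rho(I_{\Delta})$.

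For part (2), the hypothesis that $f_1, \ldots, f_s$ is an $R$-regular sequence lets me invoke Proposition \ref{prop:inclusion-preserving}(2), which upgrades the one-way implication to an equivalence: $I_{\Delta}^{(m)} \subseteq I_{\Delta}^r$ if and only if $\ffi_*(I_{\Delta})^{(m)} \subseteq \ffi_*(I_{\Delta})^r$. Consequently the two sets of witnessing ratios above are equal, and therefore their suprema agree, giving $\rho(\ffi_*(I_{\Delta})) = \rho(I_{\Delta})$.

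There really is no obstacle to overcome here; the entire content of the corollary is packaged in Proposition \ref{prop:inclusion-preserving}, and the only task is to phrase ``set of failure pairs'' in the language of the supremum defining $\rho$. I would present the proof in at most a few lines, essentially as the two displayed inclusions above.
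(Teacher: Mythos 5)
Your argument is correct and is exactly what the paper intends: the corollary is stated with no separate proof, merely the remark that Proposition \ref{prop:inclusion-preserving} gives it, and your translation of that proposition into a containment of the sets of failure ratios $\{m/r : I^{(m)} \not\subseteq I^r\}$ (contrapositive for part (1), equivalence for part (2)) is the intended one-line deduction.
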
 

The second part of this result raises the following question: 

\begin{question}
    \label{q:res}
Does  the resurgence remain invariant  for any specialization of a matroid ideal as considered in Theorem \ref{thm:matroid spec}?
\end{question} 

Now we determine the resurgence in many new cases, giving further affirmative evidence for Question \ref{q:res}. 

\begin{thm} 
    \label{thm:resurgence spec uniform matroid} 
    
Assume that a sequence of homogeneous polynomials   $f_1,\ldots,f_s \in R$ satisfies one
of the following conditions: 
\begin{enumerate}

\item $f_1,\ldots,f_s \in R$ is an $R$-regular sequence. 

\item Any subset of at most $c+1$ of the forms $f_i$ forms an $R$-regular sequence, and all the forms $f_i$ have the same degree. 
\end{enumerate}
Consider the codimension $c$ hypersurface configuration $V_{\lambda, c} \subset \PP^n$ determined by $f_1,\ldots,f_s \in R$. Its ideal has resurgence
\[
\rho (I_{V_{\lambda, c}}) = \frac{c \cdot (s-c+1)}{s}.
\] 
\end{thm}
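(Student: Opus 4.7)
The plan is to reduce everything to computing the resurgence of $I_{s,c}$, the Stanley--Reisner ideal of a uniform matroid (Example \ref{exa:d-stars}), and then to transport the answer to $I_{V_{\lambda,c}} = \ffi_*(I_{s,c})$ using the results of Section 3. Under hypothesis (1) the forms $f_1,\ldots,f_s$ are an $R$-regular sequence, so Corollary \ref{cor:resurgence}(2) immediately yields $\rho(I_{V_{\lambda,c}}) = \rho(I_{s,c})$. Under the weaker hypothesis (2), Corollary \ref{cor:resurgence}(1) provides only the upper bound $\rho(I_{V_{\lambda,c}}) \leq \rho(I_{s,c})$; for the matching lower bound I would apply \eqref{eq:estimate rho} together with $\alpha(I_{V_{\lambda,c}}) = d(s-c+1)$ (from Proposition \ref{hf and ideal}(4), since all $f_i$ have degree $d$) and $\widehat{\alpha}(I_{V_{\lambda,c}}) = ds/c$ (from Corollary \ref{cor:Waldschmidt} combined with $\widehat{\alpha}(I_{s,c}) = s/c$, Example \ref{exa:Waldschmidt uniform spec}). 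Both cases therefore reduce to the intrinsic identity $\rho(I_{s,c}) = c(s-c+1)/s$.

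To compute $\rho(I_{s,c})$ I would work purely combinatorially with monomials in $S = k[y_1,\ldots,y_s]$, relying on two criteria. The first, from Theorem \ref{thm:symb powers}(1) applied to the primary decomposition $I_{s,c} = \bigcap_{|F|=c}(y_i : i \in F)$, says that $y^a \in I_{s,c}^{(m)}$ iff $a_{(1)} + \cdots + a_{(c)} \geq m$, where $a_{(1)} \leq \cdots \leq a_{(s)}$ denotes the non-decreasing rearrangement of $(a_1,\ldots,a_s)$. The second characterizes the ordinary power: $y^a \in I_{s,c}^r$ iff there exist subsets $F_1,\ldots,F_r \subseteq [s]$ with $|F_j|=s-c+1$ and $\#\{j : i \in F_j\} \leq a_i$ for each $i$; viewing the incidence matrix as a bipartite $b$-matching with all column degrees equal to the common value $s-c+1$, the Gale--Ryser conditions collapse to the single scalar inequality $\sum_{i=1}^s \min(a_i,r) \geq r(s-c+1)$. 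The lower bound $\rho(I_{s,c}) \geq c(s-c+1)/s$ is then witnessed explicitly by setting $m = c(s-c+1)t - 1$, $r = st$, $a_1 = (s-c+1)t - 1$, and $a_2 = \cdots = a_s = (s-c+1)t$: a direct check gives $y^a \in I_{s,c}^{(m)}$ while $\sum \min(a_i,r) = s(s-c+1)t - 1 < r(s-c+1)$, so $y^a \notin I_{s,c}^r$, and $m/r \to c(s-c+1)/s$ as $t \to \infty$.

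The bulk of the work is the matching upper bound. Assuming $m/r > c(s-c+1)/s$ and $y^a \in I_{s,c}^{(m)}$, let $k = \#\{i : a_i \geq r\}$. If $k \geq s-c+1$ then $\sum \min(a_i,r) \geq kr \geq r(s-c+1)$, and we are done. Otherwise $k \leq s-c$, the $c$ smallest entries of $a$ are all $< r$, and combining $a_{(c)} \geq m/c$ with $a_{(j)} \geq a_{(c)}$ for $c \leq j \leq s-k$ gives
\[
\sum_{a_i<r} a_i \;\geq\; (a_{(1)}+\cdots+a_{(c)}) + (s-k-c)\,a_{(c)} \;\geq\; m + (s-k-c)\tfrac{m}{c} \;=\; \tfrac{m(s-k)}{c},
\]
so $\sum \min(a_i,r) \geq m(s-k)/c + kr$. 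The desired inequality $m(s-k)/c + kr \geq r(s-c+1)$ rearranges to $m/r \geq c(s-c+1-k)/(s-k)$, and since $(s-c+1-k)/(s-k)$ is decreasing in $k$ on $\{0,1,\ldots,s-c\}$ with maximum $(s-c+1)/s$ at $k=0$, the hypothesis on $m/r$ handles every case and forces $y^a \in I_{s,c}^r$. The main obstacle I anticipate is the Gale--Ryser reduction that yields the second monomial criterion; once that is clean, the estimate for the upper bound is essentially a one-line computation in $k$, and the two hypotheses of the theorem drop out of Section 3 as outlined above.
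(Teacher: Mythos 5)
Your reduction to the monomial case is exactly the paper's: under hypothesis (1) you invoke Corollary \ref{cor:resurgence}(2) to get $\rho(I_{V_{\lambda,c}})=\rho(I_{s,c})$, and under hypothesis (2) you use the same sandwich $\frac{\alpha(I_{V_{\lambda,c}})}{\widehat{\alpha}(I_{V_{\lambda,c}})}\le \rho(I_{V_{\lambda,c}})\le \rho(I_{s,c})$ coming from \eqref{eq:estimate rho}, Corollary \ref{cor:Waldschmidt} and Corollary \ref{cor:resurgence}(1), with $\alpha(I_{V_{\lambda,c}})=d(s-c+1)$ and $\widehat{\alpha}(I_{V_{\lambda,c}})=ds/c$. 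Where you genuinely diverge is the key input $\rho(I_{s,c})=\frac{c(s-c+1)}{s}$: the paper simply cites this as \cite[Theorem C]{LM}, whereas you prove it from scratch, and your combinatorial argument is correct. The symbolic-power criterion $a_{(1)}+\cdots+a_{(c)}\ge m$ is immediate from the primary decomposition; the ordinary-power criterion $\sum_i\min(a_i,r)\ge r(s-c+1)$ does follow from a max-flow/min-cut (or Gale--Ryser) argument, and the a priori family of inequalities $\sum_{i\in B}\min(a_i,r)\ge r(|B|-c+1)$ over subsets $B$ really does collapse to the single one for $B=[s]$ because each summand is at most $r$ --- this collapse is the one step a reader might balk at, so it deserves to be written out. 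The extremal monomial for the lower bound and the case analysis on $k=\#\{i: a_i\ge r\}$ both check out (note $(s-c+1-k)/(s-k)=1-(c-1)/(s-k)$ is non-increasing in $k$, constant when $c=1$, which suffices). The trade-off is clear: the paper's proof is two lines modulo the external reference, while yours is self-contained and in effect reproves the relevant case of \cite[Theorem C]{LM} along the way.
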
    

This theorem is one of the few results which determines the resurgence of the ideal of a subscheme whose dimension is at least one and whose codimension is at least two, apart from  ideals of cones \cite[Proposition 2.5.1]{BH} and certain monomial ideals (see \cite[Theorem 1.5]{GHvT} and \cite[Theorem C]{LM}). 
In particular, the special case of Theorem \ref{thm:resurgence spec uniform matroid}, where all the forms $f_i$ are linear, gives the resurgence of every star configuration and thus answers  \cite[Question 4.12]{GHM3} affirmatively and extends \cite[Theorem C]{LM}.

\begin{proof}[Proof of Theorem \ref{thm:resurgence spec uniform matroid}]
Assume Condition (1) is satisfied, that is,  $I_{V_{\lambda, c}}$ is obtained by specializing the matroid ideal $I_{s,c}$, using the regular sequence $f_1,\ldots,f_s$. Then the result is a consequence of Corollary \ref{cor:resurgence}  and $\rho(I_{s,c}) =  \frac{c \cdot (s-c+1)}{s}$   \cite[Theorem C]{LM}. 

If Condition (2) is satisfied we argue similarly. Indeed, using also Corollary \ref{cor:Waldschmidt}, we get
\[
\frac{c \cdot (s-c+1)}{s} =  \frac{\alpha(I_{s,c})}{\widehat{\alpha}(I_{s,c})}  =  \frac{\alpha(I_{V_{\lambda, c}})}{\widehat{\alpha}(I_{V_{\lambda, c}})} \leq \rho(I_{V_{\lambda, c}}) \le \rho(I_{s, c}) = \frac{c \cdot (s-c+1)}{s}, 
\]
which yields our claim. 
\end{proof}

We now illustrate our results by considering specializations of  coordinate points. 

\begin{example}
     \label{exa:resurgence} 
If $f_0,\ldots,f_n$ is an $R$-regular sequence, then the ideal 
\[
\ffi_* (I_{n+1,n}) = \bigcap_{i=0}^n (f_0,\ldots,\hat{f_i},\ldots,f_n), 
\]
where $\hat{}$ indicates omitting, satisfies according to Theorem \ref{thm:resurgence spec uniform matroid}
\[
\rho (\ffi_* (I_{n+1,n})) = \frac{2 n}{n+1}. 
\]

Recall that in the case, where all the $f_i$ have the same degree, we have seen in the proof of Theorem \ref{thm:resurgence spec uniform matroid} that 
\[
\rho(\ffi_*(I_{n+1,n}))=\frac{\alpha(\ffi_*(I_{n+1,n}))}{\widehat{\alpha}(\ffi_*(I_{n+1,n}))}. 
\]
 Hence, Estimate \eqref{eq:estimate rho} is sharp in this case. However, if we  consider the situation in Example \ref{exa:non-uniform spec}, that is, $n = 3$ and $d_1 = d_2 =d_3 = 1$ and $d_4 = d \le 2$, then we get 
\[
\frac{\alpha(\ffi_*(I_{4,3}))}{\widehat{\alpha}(\ffi_*(I_{4,3}))} = \frac{2}{\frac{3}{2} }= \frac{4}{3} < \frac{3}{2} = \rho (\ffi_*(I_{4,3})). 
\]
\end{example} 
\medskip

As a final remark we want to draw attention to a remarkable connection between the configurations considered in this paper and a classical question in projective geometry.

To understand this connection   let $\lambda = [d_1, \ldots , d_s]$ be a partition of $d$.  The variety $\X_{n,\lambda} \subseteq \PP([R]_d) = \PP^{N-1}$, $N = 
\binom{d+n}{n}$, of $\lambda$-reducible forms of degree $d$ is defined by:
$$
\X_{n,d} := \{ [g] \in \PP^{N-1}\ | \ g = g_1 \cdots g_s,\ \deg g_i = d_i \} .
$$
These varieties have an interesting history and are discussed in detail in   \cite{Mamma},  \cite{CGGS} and \cite{CGGHMNS}.

One is interested in calculating the dimension of the (higher) secant varieties of this variety.  The famous Terracini Lemma explains that to do this one has to calculate the span of tangent spaces at general points of the variety.  So, it is important to know the tangent space at a general point of this variety. The remarkable fact is that if  $P =[f_1\cdots f_s]$ is a general point of $\X_{n,\lambda}$       then the projectivized tangent space at $P$      is the projectivization of the degree $d$ component of the ideal  $I$  which defines the codimension 2  $\lambda$-configuration associated to the forms $f_1, \ldots,f_s$  \cite[Proposition 3.2]{CChG}.


\end{document}